\documentclass[a4paper]{amsart}
\usepackage{amsmath}
\usepackage{amssymb}
\usepackage{enumitem}
\usepackage{layouts}
\usepackage{amsthm}
\usepackage{color}

\newcommand{\R}{\mathbb{R}}
\newcommand{\C}{\mathbb{C}}
\newcommand{\brk}[1]{\langle #1 \rangle}
\newcommand{\bbrk}[1]{\langle\langle #1 \rangle\rangle}

\newtheorem{theorem}{Theorem}[section]
\newtheorem{definition}{Definition}[section]
\newtheorem{lemma}[theorem]{Lemma}
\newtheorem{prop}[theorem]{Proposition}

\numberwithin{equation}{section}

\title{Morse index of free boundary disk in pseudoconvex domain}
\author{Chi Fai Chau}

\begin{document}
\maketitle
\begin{abstract}
In this paper we study the Morse index for the $\overline{\partial}$-energy of a non-holomorphic disk in a strictly pseudoconvex domain in $\C^n$ or in a K\"ahler manifold with non-negative bisectional curvature. We give a 
proof that a $\overline{\partial}$-energy minimizing disk is holomorphic; in fact, more generally we show that
a non-holomorphic critical disk for the $\overline{\partial}$-energy has Morse index at least $n-1$.
We also extend the result to domains which satisfy the weaker $k$-pseudoconvexity property for $k\geq 2$.
\end{abstract}

\section{Introduction}
In this paper, we study the Morse index of a free boundary surface in a pseudoconvex domain in $\C^n$. We consider those maps, which we
call $\overline{\partial}$-harmonic, that are critical points of the $\overline{\partial}$-energy on the space of maps from unit disk $D$ to a pseudoconvex domain $N$ such that the boundary $\partial D$ is mapped into the boundary $\partial N$. As an application, we show that any minimizing $\overline{\partial}$-harmonic map from the disk to a strictly pseudoconvex domain is holomorphic. Combined with existence theory, the study of the Morse index can provide a powerful tool in studying the topology of pseudoconvex domain in $\C^n$. We also extend the result to any strictly pseudoconvex domain in a 
K\"{a}hler manifold with nonnegative bisectional curvature. In \cite{SiuYau}, Siu-Yau proves the Frankel conjecture, which states any compact K\"{a}hler manifold with positive bisectional curvature is biholomorphic to complex projective space. Their holomorphicity theorem plays an important role in the proof.
\begin{theorem} (\cite{SiuYau})
Suppose $M$ is compact K\"{a}hler manifold with positive holomorphic bisectional curvature. Let $f:\mathbb{P}^1\to M$ be an energy-minnimizing map. Then $f$ is either holomorphic or anti holomorphic.
\end{theorem}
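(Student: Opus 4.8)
The plan is to combine the classical structure theory of harmonic maps from a Riemann surface into a K\"ahler manifold with a computation of the \emph{complexified} second variation of the energy, tested against a field manufactured from a holomorphic vector field on $\mathbb{P}^1$; positivity of the holomorphic bisectional curvature then forces the test field to detect a degeneracy of $df$. First, the reductions. As an energy minimizer in its homotopy class, $f$ is a smooth harmonic map. Decompose the differential as $df=\partial f+\overline{\partial f}$ in the usual way and split $\partial f=\partial' f+\partial'' f$ into its $f^*T^{1,0}M$- and $f^*T^{0,1}M$-valued parts; then $f$ is holomorphic iff $\partial'' f\equiv 0$ and anti-holomorphic iff $\partial' f\equiv 0$, so it suffices to prove that one of $\partial' f,\partial'' f$ vanishes identically. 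Since $M$ is K\"ahler the pullback Levi-Civita connection on $f^*TM\otimes\C$ preserves the splitting $f^*T^{1,0}M\oplus f^*T^{0,1}M$, and on the Riemann surface $\mathbb{P}^1$ its $(0,1)$-part makes each summand (and hence its twist by $K_{\mathbb{P}^1}$) a holomorphic bundle. In a holomorphic coordinate $z$, harmonicity reads $\nabla_{\bar z}f_z=0$, and since $f_z=\partial' f(\partial_z)+\partial'' f(\partial_z)$ splits by type, this yields $\nabla_{\bar z}\,\partial' f(\partial_z)=\nabla_{\bar z}\,\partial'' f(\partial_z)=0$. Hence $\partial' f$ and $\partial'' f$ are holomorphic sections of $f^*T^{1,0}M\otimes K_{\mathbb{P}^1}$ and $f^*T^{0,1}M\otimes K_{\mathbb{P}^1}$; unless identically zero, each vanishes at only finitely many points of $\mathbb{P}^1$.

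Next I would test the second variation. Fix any nonzero $\xi\in H^0(\mathbb{P}^1,T^{1,0}\mathbb{P}^1)$ — this space is $3$-dimensional over $\C$, and this is the one place the genus-$0$ hypothesis is used — and note $\xi$ has only finitely many zeros. Put $W:=\iota_\xi\,\partial'' f$, a smooth section of $f^*T^{0,1}M\subset f^*TM\otimes\C$; since $\xi$ and $\partial'' f$ are holomorphic, $\nabla_{\bar z}W=0$. Because $f$ minimizes energy, its index form $I_f(V,V)=\int_{\mathbb{P}^1}\bigl(|\nabla V|^2-\sum_i\brk{R^M(V,df(e_i))df(e_i),V}\bigr)$ is $\ge 0$ on all real sections $V$ of $f^*TM$; therefore its Hermitian extension $H$ to complex sections of $f^*TM\otimes\C$ is $\ge 0$ as well. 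Writing $\bbrk{\cdot,\cdot}$ for the sesquilinear extension of the metric, we have $H(W,W)=\int_{\mathbb{P}^1}\bigl(|\nabla W|^2-\sum_i\bbrk{R^M(W,df(e_i))df(e_i),W}\bigr)\ge 0$.

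The heart of the argument is to evaluate $H(W,W)$. Because $\nabla_{\bar z}W=0$ one has $|\nabla W|^2=2g^{z\bar z}|\nabla_z W|^2$; integrating by parts on the closed surface $\mathbb{P}^1$ and using $\nabla_{\bar z}\nabla_z W=R^M(f_{\bar z},f_z)W$ rewrites $\int_{\mathbb{P}^1}|\nabla W|^2$ as a curvature integral. In the remaining curvature term, expand $df=f_z\,dz+f_{\bar z}\,d\bar z$, decompose $f_z,f_{\bar z}$ by type, discard every summand whose four curvature arguments are not of balanced type (these vanish on a K\"ahler target), and simplify the survivors using the K\"ahler identity $R_{i\bar j k\bar l}=R_{k\bar j i\bar l}$ and the first Bianchi identity. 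After a cancellation between the former Dirichlet term and the Hessian curvature term, everything collapses to
\[
H(W,W)\;=\;-\,2\int_{\mathbb{P}^1} R^M\!\bigl(\partial' f(\partial_z),\,\overline{\partial' f(\partial_z)},\,\overline{W},\,W\bigr)\,\tfrac{i}{2}\,dz\wedge d\bar z,
\]
the integral of the holomorphic bisectional curvature of $M$ along the complex planes spanned by $\partial' f(\partial_z)$ and $\overline{W}$ in $T^{1,0}M$. By positivity of the holomorphic bisectional curvature the integrand is $\ge 0$ pointwise, so $H(W,W)\le 0$; combined with $H(W,W)\ge 0$, the integrand vanishes identically, so at every point of $\mathbb{P}^1$ either $\partial' f(\partial_z)=0$ or $\overline{W}=\xi^z\,\overline{\partial'' f(\partial_z)}=0$. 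Since $\partial' f$, $\partial'' f$, $\xi$ each vanish at only finitely many points unless identically zero, and $\mathbb{P}^1$ is not a finite union of finite sets, we conclude $\partial' f\equiv 0$ or $\partial'' f\equiv 0$; that is, $f$ is anti-holomorphic or holomorphic.

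The step I expect to be the main obstacle is the curvature bookkeeping above: one must verify that after the integration by parts the a priori sign-indefinite curvature terms cancel, leaving only the negatively signed bisectional-curvature term. Passing to the Hermitian extension of the index form — equivalently, testing with the complex field $W=\iota_\xi\partial'' f$ rather than with its real and imaginary parts separately — is what makes this cancellation transparent, because the non-Hermitian part of the second variation, where the troublesome terms live, is simply discarded; a purely real approach would instead use the three rotation fields of the round $S^2$ together with an $\mathrm{SU}(2)$-averaging. The three hypotheses are each used once: genus $0$ supplies $\xi$, the K\"ahler condition gives the type decomposition and the holomorphicity of $\partial' f,\partial'' f$, and the positive holomorphic bisectional curvature supplies the final sign.
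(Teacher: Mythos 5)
Your argument is correct and is essentially the original Siu--Yau proof, which the paper only cites and does not reprove: harmonicity plus the K\"ahler condition make $\partial' f$ and $\partial'' f$ holomorphic, and testing the Hermitian extension of the second variation with $W=\iota_\xi\,\partial'' f$ for a holomorphic vector field $\xi$ on $\mathbb{P}^1$ kills everything except the holomorphic bisectional curvature term, whose sign forces $\partial' f$ or $\partial'' f$ to vanish identically. This is precisely the closed-surface prototype of the machinery the paper develops in Lemma 2.2 and Section 3 for the free-boundary setting, where the role of the curvature term is instead played by the boundary pseudoconvexity term.
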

We remark that in the K\"ahler case there are three different energies, the $\overline{\partial}$-energy, the $\partial$-energy and the full energy. For 
closed surfaces or surfaces with a fixed boundary curve, these energies are equivalent and have the same critical points with the same Morse indices. The
situation for the free boundary condition is quite different. In this case, the critical points, when the domain is the unit disk, are always minimal
surfaces, but the boundary conditions are different. For the full energy, the boundary condition is the usual free boundary condition which says that
the surface meets the boundary orthogonally (the outer unit normal of the surface is normal to the boundary of the domain). For the 
$\overline{\partial}$-energy, the boundary condition says that sum of the outer normal to the surface plus the complex structure $J$ applied to the
tangent vector to the boundary is parallel to the normal to the boundary of the domain (see Section 2.1). For the $\partial$-energy the boundary condition would have the sum replaced by the difference. 

We extend the holomorphicity theorem to a free boundary disk in a pseudoconvex domain in terms of the Morse index of $\overline{\partial}$-harmonic
map. The main result of this paper is the following.
\begin{theorem}\label{maintheorem}
Let $D$ be a unit disk and $N$ be a strictly pseudoconvex domain in $ C^n$. Let $f:D\to N$ be a smooth map, with $f(\partial D)\subset \partial N$. Suppose $f$ is critical point of the $\overline{\partial}$-energy. If $f$ is not holomorphic, then $f$ has Morse index at least $n-1$ for the $\overline{\partial}$-energy. In particular, if $f$ is stable for the 
$\overline{\partial}$-energy, then $f$ is holomorphic.
\end{theorem}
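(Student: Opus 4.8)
The plan is to adapt the argument of Siu--Yau to the free boundary problem. Throughout, admissible variations of $f$ are those $f_t$ with $f_t(\partial D)\subset\partial N$, so their variation field $V$ must satisfy $V|_{\partial D}\in T_{f(x)}\partial N$, while $f$ itself satisfies the natural (``$\overline\partial$'') boundary condition recalled in Section~2.1. The first step is to write down the second variation $Q(V)=\delta^2 E_{\overline\partial}(f)(V,V)$ at the critical point $f$ and, writing a real field as $V=\xi+\bar\xi$ with $\xi$ a section of $f^*T^{1,0}N$, to reorganize it in the Siu--Yau form
\[
Q(\xi+\bar\xi)\;=\;2\,\|\overline\partial^{\nabla}\xi\|_{L^2(D)}^2\;-\;\big(\text{holomorphic bisectional curvature term}\big)\;+\;\big(\text{boundary integral}\big),
\]
where the curvature term is $\geq 0$ by nonnegative bisectional curvature (and here vanishes identically, since $N\subset\C^n$ is flat), the boundary integral is built from the second fundamental form of $\partial N$, the Levi form of a strictly plurisubharmonic defining function $\rho$, and the restriction $\partial_{\bar w}f|_{\partial D}$, and $\overline\partial^{\nabla}$ is the Koszul--Malgrange operator on $f^*T^{1,0}N$ (so for $N\subset\C^n$, a ``holomorphic $\xi$'' is just a holomorphic map $\xi\colon D\to\C^n$).

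Next I would restrict to test fields $\xi$ that are \emph{holomorphic} and \emph{admissible}, meaning $\xi(x)$ lies in the maximal complex subspace $T^{\C}_{f(x)}\partial N$ for every $x\in\partial D$ (equivalently, each real field $\mathrm{Re}(e^{i\theta}\xi)$ is tangent to $\partial N$ along $\partial D$). For such $\xi$ the first term in $Q$ drops out, and I would compute $Q(\mathrm{Re}(e^{i\theta}\xi))$ and average over $\theta\in[0,2\pi]$: the $S^1$-average kills the non-Hermitian (``holomorphic Hessian'') contributions of $\rho$ and of the acceleration term, leaving a negative multiple of $\oint_{\partial D}\mathcal L_\rho(\xi,\bar\xi)\,|\partial_{\bar w}f|^2\,ds$ (up to the precise weight produced by the $\overline\partial$-boundary condition). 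Here strict pseudoconvexity gives $\mathcal L_\rho(\xi,\bar\xi)>0$, and, crucially, since $f$ is not holomorphic the $\C^n$-valued function $\partial_{\bar w}f$ is anti-holomorphic and not identically zero, hence nonzero off a finite subset of $\partial D$; therefore the average is $<0$ whenever $\xi\not\equiv 0$. A short linear-algebra argument --- on each complex line the negativity of the average forces $Q$ to be negative on the associated real $2$-plane once the non-Hermitian term is dominated by the Hermitian one, and this smallness propagates --- then upgrades this to: $Q$ is negative definite on a real subspace of admissible variations of dimension at least $\dim_\C\mathcal H$, where $\mathcal H$ is the space of admissible holomorphic test fields.

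It then remains to produce enough of these, i.e. to show $\dim_\C\mathcal H\geq n-1$. The space $\mathcal H$ consists of holomorphic maps $\xi\colon\overline D\to\C^n$ whose boundary values lie in the rank-$(n-1)$ complex subbundle $\Lambda=T^{\C}\partial N$ pulled back along $\gamma=f|_{\partial D}$; this is a Riemann--Hilbert problem for the $\overline\partial$-operator with a (complex) subbundle boundary condition. Although such a condition is not elliptic in general, I would argue that strict pseudoconvexity forces the relevant partial indices of $\Lambda$ to be nonnegative --- positivity of the Levi form is exactly the positivity of the ``Gauss-type'' loop attached to $\Lambda$ --- so that by Birkhoff factorization $\dim_\C\mathcal H\geq n-1$. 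Combining the three steps gives Morse index at least $n-1$, whence for $n\geq 2$ a stable critical disk is holomorphic.

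I expect the main obstacle to be twofold and to live in the last two steps. First, identifying the exact boundary term in the second variation and verifying that, after the $S^1$-average, the Levi-form integral genuinely dominates (rather than merely appears) --- this requires carefully exploiting the $\overline\partial$-boundary condition satisfied by $f$ and the harmonicity of $f$ in the integrations by parts, and is the place where the distinction between the $\overline\partial$-energy and the full-energy free boundary conditions is essential. Second, the dimension count for $\mathcal H$: because the subbundle boundary condition is non-elliptic, $\dim_\C\mathcal H$ is not a topological invariant, and one must use strict pseudoconvexity to exclude ``negative winding'' of $\Lambda$ that would push the count below $n-1$. The interior Bochner computation and the reduction to holomorphic test fields are, by contrast, the routine part inherited from Siu--Yau.
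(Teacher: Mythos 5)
Your overall architecture --- complexify the second variation, test it on holomorphic sections of $f^*T^{(1,0)}N$ subject to a boundary admissibility condition, and let strict pseudoconvexity make the surviving boundary term negative --- is the same as the paper's, and your first two steps correspond to Lemma 2.3, Proposition 2.1 and Proposition 3.1 there. One caveat already at step two: the weight multiplying the Levi form in the boundary integral is not $|\partial_{\bar w}f|^2$ but the function $\lambda$ defined by $\frac{df}{dr}+J\frac{df}{d\theta}=\lambda\nu$, and its sign is not automatic; the paper must first prove that a critical map is weakly conformal in order to conclude $\lambda\geq 0$, a point your sketch elides. (Your $S^1$-averaging is the paper's identity $I(U,U)=I(\mathrm{Re}\,U,\mathrm{Re}\,U)+I(\mathrm{Im}\,U,\mathrm{Im}\,U)$ for the Hermitian extension of the index form.)

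The genuine gap is in your dimension count for $\mathcal H$. You pose the existence of admissible holomorphic fields as a Riemann--Hilbert problem with boundary values in the rank-$(n-1)$ \emph{complex} subbundle $\Lambda=T^{\C}\partial N$ along $f|_{\partial D}$ and propose to get $\dim_\C\mathcal H\geq n-1$ from nonnegativity of partial indices, with strict pseudoconvexity supplying the positivity. This step does not work as described: a complex subbundle boundary condition for $\overline\partial$ is not elliptic, and the solution space is an $\mathcal O(D)$-module, hence either $\{0\}$ or infinite-dimensional according to whether $\Lambda$ admits a holomorphic extension over $D$; it is not governed by a Birkhoff factorization the way a maximally real condition is. Moreover the Levi form is an extrinsic second-order quantity of $\partial N$, while the solvability of this problem depends on first-order winding data of the loop of subspaces $x\mapsto\Lambda_{f(x)}$, and there is no mechanism by which the former controls the latter. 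The paper circumvents this entirely: it solves the \emph{elliptic, totally real} problem $\overline\partial W=0$ on $D$, $\mathrm{Im}\,W=0$ on $\partial D$, for sections of the full rank-$2n$ bundle $f^*TN\otimes\C$ (real Fredholm index $2n$, by Fraser's computation), extracts $n$ sections $V_j=W_j-iJW_j$ of $T^{(1,0)}N$ that are $\C$-linearly independent and nonvanishing off a finite set, and only then enforces admissibility by the pointwise holomorphic combinations $U_j=\bbrk{V_k,f_{\bar z}}V_j-\bbrk{V_j,f_{\bar z}}V_k$; the coefficients are holomorphic because $f_{\bar z}$ is anti-holomorphic (harmonicity of $f$), so admissibility costs exactly one complex dimension and one gets the $n-1$ negative directions. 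Note also that the correct admissibility condition is the single scalar equation $\bbrk{U,f_{\bar z}}=0$ (Proposition 2.1), which is weaker than membership in $T^\C\partial N$ wherever $\lambda=0$; it is precisely this weaker, holomorphically structured condition that makes the linear-algebra trick possible, whereas your full subbundle condition has no such structure. To repair your argument you would need to adopt this two-stage construction or else actually prove the partial-index claim, which I do not believe follows from pseudoconvexity.
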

The key point in our proof is the construction of holomorphic variations that give negative values in index form. The holomorphic variations then gives a lower bound on the index. In the second section, we derive the complex version of the second variation formular of $\overline{\partial}$-energy and give the condition on being an admissable variation. We then construct holomorphic variations and prove the main result in Section 3. In \cite{Fraser2}, Fraser gave the index estimate and instability theorem for minimal disk in $k$-convex hypersurface in Riemannian manifold. We generalize the concept of pseudoconvexity and define $k$-pseudoconvexity that is analogous to $k$-convexity in real manifolds in the last section. We then give index theorem for the weaker boundary condition $k$-pseudoconvex  and also in the case of pseudoconvex domain in K\"{a}hler manifold with positive bisectional curvature since the construction of holomorphic variations can also be adpated in such cases.

To illustrate the theorem and show that the hypotheses are necessary, we give several examples. First we give an
example of a non-holomorphic critical disk for the $\overline{\partial}$-energy in a strictly pseudoconvex domain. We
also give an example of a stable critical disk for the full energy (free boundary disk) in a strictly pseudoconvex
domain which is not holomorphic. Further we give an example of a holomorphic free boundary disk in a strictly
pseudoconvex domain which is unstable for the full energy. Finally we give an example of a critical disk for the
$\overline{\partial}$-energy in a weakly pseudoconvex domain which is stable. This shows that the strict 
pseudoconvexity assumption is necessary in Theorem \ref{maintheorem}. 

We also define the notion of a strictly $k$-pseudoconvex domain for integer $k\geq 1$. This means that the trace
of the second fundamental form on any $k$-dimensional complex subspace tangent to the
boundary is positive at each point of the boundary. 
\begin{theorem}
Let $D$ be the unit disk and $N$ be a strictly $k$-pseudoconvex domain in $ C^n$. Let $f:D\to N$ be a smooth map, with $f(\partial D)\subset \partial N$. Suppose $f$ is a critical point of the $\overline{\partial}$-energy. If $f$ is not holomorphic, then $f$ has Morse index at least $n-k$ for the $\overline{\partial}$-energy.
\end{theorem}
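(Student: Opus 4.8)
The plan is to run the proof of Theorem \ref{maintheorem} with strict pseudoconvexity used only through the weaker $k$-pseudoconvexity inequality. Recall that that proof attaches to a non-holomorphic $\overline{\partial}$-critical disk $f$ a supply of \emph{holomorphic variations}---holomorphic maps $V\colon D\to\C^n$ that are complex tangent to $\partial N$ along $\partial D$---for which the index form collapses to a boundary integral
\[
Q(V,V) \;=\; -\int_{\partial D} \mathrm{II}_{f(z)}\big(V(z),V(z)\big)\,\mu(z)\,d\theta ,
\]
where $\mathrm{II}$ is the Hermitian form on the complex tangent space of $\partial N$ induced by its second fundamental form, $\mu$ is the scalar arising from the free boundary condition of $f$, and the interior term vanishes because $\C^n$ is flat and $V$ is holomorphic. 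When $N$ is strictly pseudoconvex $\mathrm{II}_q$ is positive definite on all of $T^{\C}_q\partial N$, and Section~3 produces an $(n-1)$-dimensional space of such $V$ on which $Q<0$. For a strictly $k$-pseudoconvex domain one only knows that the trace of $\mathrm{II}_q$ over every $k$-dimensional complex subspace of $T^{\C}_q\partial N$ is positive; since for a Hermitian form this forces the $k$-th smallest eigenvalue, and hence all larger ones, to be positive, $\mathrm{II}_q$ has at least $(n-1)-(k-1)=n-k$ positive eigenvalues at each $q\in\partial N$.

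I would then localize near one boundary point. Choose $p_0\in\partial D$ so that, with $q_0=f(p_0)$, the construction of Section~3 is favorable at $p_0$ (the non-holomorphicity of $f$ enters here: $\overline{\partial}f$ is antiholomorphic and not identically zero, so the data governing the sign of the integrand are nondegenerate near a suitable $p_0$), and fix an $(n-k)$-dimensional complex subspace $P_0\subset T^{\C}_{q_0}\partial N$ on which $\mathrm{II}_{q_0}$ is positive definite. Following the recipe of Section~3, produce holomorphic admissible variations $W_1,\dots,W_{n-k}$ whose values at $p_0$ span $P_0$ (using that the evaluation of the space of admissible holomorphic variations at $p_0$ surjects onto $T^{\C}_{q_0}\partial N$), and multiply each by a holomorphic function $\varphi$ on $D$, smooth up to $\partial D$, whose boundary modulus is sharply concentrated at $p_0$. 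Multiplication by $\varphi$ preserves holomorphicity and complex tangency, so every $\varphi W_j$ is again an admissible holomorphic variation. By continuity of $f$, of $\mathrm{II}$ and of the $W_j$, there are $\delta>0$ and an arc $I\ni p_0$ with
\[
\mathrm{II}_{f(z)}\Big(\sum_j c_jW_j(z),\sum_j c_jW_j(z)\Big)\,\mu(z)\;\ge\;\delta
\]
for all unit vectors $c$ and all $z\in I$, while the left side stays bounded on $\partial D$; taking $\varphi$ concentrated enough on $I$ then makes $Q\big(\sum_j c_j\varphi W_j,\sum_j c_j\varphi W_j\big)<0$ for every $c\ne0$. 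Hence $Q$ is negative definite on the $(n-k)$-dimensional space $\mathrm{span}(\varphi W_1,\dots,\varphi W_{n-k})$, and $f$ has Morse index at least $n-k$.

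The step I expect to be the main obstacle is this localization, which is where the argument genuinely departs from the $k=1$ case of Theorem \ref{maintheorem}: one must exhibit holomorphic maps $D\to\C^n$ that are admissible along \emph{all} of $\partial D$ yet whose contribution to $Q$ is controlled by a single arc $I$, and one must check that the scalar weight $\mu$ does not change sign across $I$ and can be arranged with the favorable sign near $p_0$; controlling $\mu$ is precisely what the $k=1$ proof already does on its whole family, and here we need only a local version of it. This is the complex-analytic counterpart of Fraser's argument in \cite{Fraser2}, where coordinate vector fields are concentrated near a boundary point and the $k$-convexity inequality is applied there; the same scheme, with the (vanishing) curvature term of $\C^n$ replaced by the bisectional curvature term---which has the correct sign on holomorphic variations---also handles strictly $k$-pseudoconvex domains in K\"ahler manifolds with non-negative bisectional curvature.
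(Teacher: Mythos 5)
Your proposal is correct in outline, but it takes a genuinely different route from the paper. The paper does no localization at all: it simply reuses the same global holomorphic variations $U_1,\dots,U_{n-1}$ already built for Theorem \ref{maintheorem}, and applies $k$-pseudoconvexity to each $k$-element subset $\{U_{i_1},\dots,U_{i_k}\}$: since these span a $k$-dimensional complex subspace of the complex tangent space at each boundary point, $\sum_{j=1}^k I(U_{i_j},U_{i_j})$ is (up to the nonnegative weight $\lambda$) minus the boundary integral of a trace of $\Pi$ over a $k$-plane, hence negative; therefore every $k$-subset contains some $U_j$ with $I(U_j,U_j)<0$, and a pigeonhole count leaves at least $(n-1)-(k-1)=n-k$ negative ones. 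Your route instead converts $k$-pseudoconvexity into the pointwise statement that $\Pi$ has at least $n-k$ positive eigenvalues on the complex tangent space, and concentrates admissible holomorphic variations near a boundary point; this is the complex analogue of Fraser's argument in \cite{Fraser2}. Each approach buys something. The paper's is much shorter and needs no new analysis, but as written it identifies $\sum_j\Pi(U_{i_j},U_{i_j})$ with $\mathrm{Tr}_U\Pi$, which is only the trace if the $U_{i_j}$ are pointwise orthonormal along $\partial D$ (they are merely linearly independent sections); your localization sidesteps this entirely, since at $p_0$ you diagonalize $\Pi$ and adapt the $W_j$ to the positive eigenspace. The localization you flag as the main obstacle is in fact less delicate than in the real setting: a peaking holomorphic multiplier such as $\varphi_m(z)=\bigl(\tfrac{1+\bar p_0 z}{2}\bigr)^m$ preserves both $\nabla_{\frac{d}{d\bar z}}V=0$ and the admissibility condition $\bbrk{V,\frac{df}{d\bar z}}=0$; the derivative-of-$\varphi$ terms in $\nabla_{\varphi W}\overline{\varphi W}$ pair to zero against $\nu$ because $W$ is complex tangent to $\partial N$, so the boundary integrand really is $|\varphi|^2\lambda\,\Pi(W,W)$; and $\lambda\ge 0$ vanishes only at the finitely many boundary zeros of $\frac{df}{d\bar{z}}^{(1,0)}$ (holomorphic since $f$ is harmonic), so a suitable arc $I$ with $\lambda$ bounded below always exists when $f$ is not holomorphic. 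With those checks written out, your argument is complete and, if anything, more robust than the one in the paper.
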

For the case $k=1$, the domain is strictly pseudoconvex and the above theorem coincides with theorem \ref{maintheorem}.

\subsection*{Acknowledgement} I would like to thank my advisor Richard Schoen for suggesting this topic and all the support and encouragement throughout the progress of this work.

\section{Preliminaries}

\begin{definition}{(Levi Pseudoconvex)}
Let $\Omega\subset\subset \C^n$ be a domain with $C^2$ boundary. Let $\rho:\C^n\to\R$ be defining function for $\Omega$. The domain $\Omega=\{z\in\C^n : \rho(z)<0\}$ is strictly Levi pseudoconvex if for every point $p\in\partial\Omega$
\[\sum_{j,k=1}^n\frac{d^2 \rho}{dz_jd\overline{z_k}}(p)\omega_j\overline{\omega_k}>0\]
for all $\omega\in C^n$ that satisfy
\[\sum_{j=1}^n\frac{d\rho}{dz_j}\omega_j=0\]
\end{definition}
It is equivalent to saying that $\langle \nabla_V \overline{V},\nu\rangle<0$ for any $V$ in holomorphic tangent bundle on $\partial\Omega$ that satisfies $V(\rho)=0$, where $\nu$ is outward unit normal to $\partial\Omega$

Now suppose $D$ is a unit disk in $\C$ and $N$ is a strictly pseudoconvex domain in $\C^n$ and their metric are $g$ and $h$ respectively. Let $f:D\to N$ be a smooth map. We define the partial energies as follows.

\begin{definition}{($\partial$-energy and $\overline{\partial}$-energy)}
In local holomorphic coordinate $\{f^1,\ldots, f^n\}$ on $N$ and $\{\omega\}$ on $D$, the pointwise $\overline{\partial}$-energy of the map $f$, $|\overline{\partial}f|^2$,  is defined by
\[g^{-1}\frac{df^\alpha}{d\bar{\omega}}\overline{\frac{df^\beta}{d\bar{\omega}}}h_{\alpha\bar{\beta}}\]
and we define the $\overline{\partial}$-energy $E''(f)$ to be $\int_D |\overline{\partial}f|^2 d\mathcal{A}$
where $d\mathcal{A}$ is volume form of $\Sigma$. It can be observed that $f$ is holomorphic if and only if $E''(f)=0$. Similarly, the pointwise $\partial$-energy $E'(f)$ of the map $f$, $|\partial f|^2$, is defined by
\[ g^{-1}\frac{df^\alpha}{d\omega}\overline{\frac{df^\beta}{d\omega}}h_{\alpha\bar{\beta}}\]
and the $\partial$-energy $E'(f)$ is defined by $\int_D |\partial f|^2\;d\mathcal{A}$.
\end{definition}
From the definition, the energy $E(f)$ of the map $f$ is, therefore, equal to $E'(f)+E''(f)$. The pullback of the K\"{a}hler form of $N$ under $f$ is
\[\sqrt{-1}h_{\alpha\beta}df^\alpha\wedge d\overline{f^\beta}=\sqrt{-1}h_{\alpha\beta}(\frac{df^\alpha}{d\omega}\overline{\frac{df^\beta}{d\omega}}-\frac{df^\alpha}{d\bar{\omega}}\overline{\frac{df^\beta}{d\bar{\omega}}})d\omega\wedge d\bar{\omega}\]
and hence, the difference of the partial energies,
\[E'(f)-E''(f)=\int_D \sqrt{-1}h_{\alpha\beta}df^\alpha\wedge d\overline{f^\beta}=\int_D f^\ast\omega^N\]
A. Lichnerowicz proves that the difference of the partial energies is homotopy invariant on compact manifolds (without boundary). In fact, If $f_t:D\to N$ is a family of smooth maps, then $\frac{d}{dt}f_t^\ast\omega^N=d(f_t^\ast i(v)\omega)$ where $v=\frac{df}{dt}$. Therefore, the $E'(f)-E''(f)$ is constant when $f_t$ have fixed boundary.

\subsection{The First and Second Variation Formula}

Let $f_t:D\to N$ be a family of smooth map, with $f(\partial D)\subset\partial N$. Let $V=\frac{df}{dt}|_{t=0}\in\Gamma(f^\ast TN)$ be the variation vector field and $z=x+iy$ be holomorphic coordinate on $D$. The first variation of $\overline{\partial}$-energy of $f$ with real variation field $V$ is given by 
\[\frac{d}{dt}|_{t=0}E''(f)=\int_D \brk{V,\nabla_{\frac{d}{d\bar{z}}} \frac{df}{dz}}\;dx\wedge dy+\int_{\partial D} \brk{V,\frac{df}{dr}+J\frac{df}{d\theta}}\;d\theta\]
We say that $f$ is harmonic if $\nabla_{\frac{d}{d\bar{z}}} \frac{df}{dz}=0$. Then for any harmonic map $f$, it is critical point of $\overline{\partial}$-energy if and only if $\frac{df}{dr}+J\frac{df}{d\theta}=\lambda\nu$ for some function $\lambda$, where $\nu$ is outward normal to $\partial N$.

The second variation of $\overline{\partial}$-energy along the smooth variation $f_t$ with variation field $V$ gives a symmetric billiear index form
\begin{equation}\label{2ndvar}
\begin{split}
I(V,V)=\frac{1}{2}\Big[\int_D &\|\nabla V\|^2-\langle R(V),V\rangle dx\wedge dy+\int_{\partial D}\langle\nabla_V V,\frac{df}{dr}+J\frac{df}{d\theta}\rangle d\theta\\
&+\int_{\partial D}\langle J\nabla_{\frac{d}{d\theta}}V, V\rangle\;d\theta\Big]
\end{split}
\end{equation}
The norm of $\nabla V$ is given by
\[\|\nabla V\|^2=\langle\nabla_{\frac{d}{dx}}V,\nabla_{\frac{d}{dx}}V\rangle+\langle\nabla_{\frac{d}{dy}}V, \nabla_{\frac{d}{dy}}V\rangle\]
and the endomorphism $R$ on vector bundle $f^\ast TN$ is defined by
\[R(V)=R(V,\frac{df}{dx})\frac{df}{dx}+R(V,\frac{df}{dy})\frac{df}{dy}\]

The index of $\overline{\partial}$-energy of $f$ is defined by the maximal dimension of subspace of $\Gamma(f^\ast TN)$ in which the index form in \eqref{2ndvar} is negative definite. Now consider the complexified pull back tangent bundle $f^\ast TN \otimes \C$. The metric extends to $f^\ast TN \otimes \C$ as a complex billinear form $(\;,\;)$ or a Hermitian inner product $\langle\langle\;,\;\rangle\rangle$. And the connection $\nabla$ extends to a complex linear connection on  $f^\ast TN \otimes \C$. In complexified pull back tangent bundle, the vector field $\frac{df}{d\bar{z}}$ will be a useful indicator in determining whether a section $V\in\Gamma(f^\ast T^{(1,0)}N)$ is a variation vector field to a harmonic map $f$ that is critical to $\overline{\partial}$-energy.
\begin{prop}\label{varfield}
Let $V\in\Gamma(f^\ast TN)$, $f:D\to N$ be a map which is critical to $\overline{\partial}$-energy. Let $W=V-iJV$. Then $V$, $JV$ are variation vector field along deformation of $f$ if and only if $\bbrk{W,\frac{df}{d\bar{z}}}=0$.
\end{prop}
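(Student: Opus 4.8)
The plan is to treat the proposition as a boundary identity. First I would pin down what a ``variation vector field'' is: if $f_t$ is a smooth family with $f_0=f$ and $f_t(\partial D)\subset\partial N$, then differentiating a local defining function of $\partial N$ at a point of $\partial D$ gives $\langle V,\nu\rangle=0$ on $\partial D$, where $V=\frac{df}{dt}|_{t=0}$; conversely, any $V\in\Gamma(f^\ast TN)$ that is tangent to $\partial N$ along $\partial D$ is the initial velocity of such a family (flow a vector field on $N$ extending $V$ and tangent to $\partial N$). Hence ``$V$ and $JV$ are both variation fields'' is equivalent to $\langle V,\nu\rangle=0$ and $\langle JV,\nu\rangle=0$ on $\partial D$, and since the target metric is K\"ahler we have $\langle JV,\nu\rangle=-\langle V,J\nu\rangle$, so the condition says precisely that $V|_{\partial D}$ is orthogonal to both $\nu$ and $J\nu$, i.e.\ lies in the holomorphic tangent space $H=T\partial N\cap J(T\partial N)$ of $\partial N$.

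Next I would feed in the Euler--Lagrange boundary condition from the first variation formula: since $f$ is harmonic and critical for $E''$, one has $\frac{df}{dr}+J\frac{df}{d\theta}=\lambda\nu$ on $\partial D$ for some real function $\lambda$. Writing $z=re^{i\theta}$, so that $\partial_{\bar z}=\tfrac12 e^{i\theta}(\partial_r+\tfrac{i}{r}\partial_\theta)$ and, since $f$ is a real map, $\overline{\frac{df}{d\bar z}}=\frac{df}{dz}$, I would rewrite $\bbrk{W,\frac{df}{d\bar z}}$ in terms of the $\C$-bilinear extension of the metric, insert $W=V-iJV$, and simplify using $\langle JX,Y\rangle=-\langle X,JY\rangle$ together with $J(\frac{df}{dr}+J\frac{df}{d\theta})=J\frac{df}{dr}-\frac{df}{d\theta}$. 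This should produce, on $\partial D$,
\[
\bbrk{W,\tfrac{df}{d\bar z}}=\tfrac12 e^{-i\theta}\Big(\langle V,\ \tfrac{df}{dr}+J\tfrac{df}{d\theta}\rangle+i\langle V,\ J(\tfrac{df}{dr}+J\tfrac{df}{d\theta})\rangle\Big),
\]
and substituting $\frac{df}{dr}+J\frac{df}{d\theta}=\lambda\nu$ collapses the right-hand side to $\tfrac{\lambda}{2}e^{-i\theta}(\langle V,\nu\rangle+i\langle V,J\nu\rangle)$.

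With this identity in hand the two implications fall out. If $V$ and $JV$ are variation fields, the first step gives $\langle V,\nu\rangle=\langle V,J\nu\rangle=0$ on $\partial D$, hence $\bbrk{W,\frac{df}{d\bar z}}=0$. Conversely, if $\bbrk{W,\frac{df}{d\bar z}}=0$ on $\partial D$, then $\lambda\langle V,\nu\rangle=\lambda\langle V,J\nu\rangle=0$, so $V\perp\nu$ and $V\perp J\nu$ at every boundary point where $\lambda\neq0$; to get these on all of $\partial D$ one uses that $f$ is not holomorphic---if $\lambda$ vanished on an open subarc the boundary condition would read $\frac{df}{dr}=-J\frac{df}{d\theta}$ there, and together with the harmonic-map equation $\nabla_{\frac{d}{d\bar z}}\frac{df}{dz}=0$ a unique-continuation argument forces $f$ to be holomorphic, a contradiction---so $\{\lambda=0\}$ has empty interior and the orthogonality extends to all of $\partial D$ by continuity; thus $V$ and $JV$ are variation fields. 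The bulk of the work is the algebra in the middle step, where one must be careful with the convention relating the Hermitian pairing $\bbrk{\cdot,\cdot}$ to the $\C$-bilinear extension of the metric and with the fact that $\frac{df}{d\bar z}$ has mixed type unless $f$ is holomorphic; the one genuinely non-formal point is excluding zero subarcs of $\lambda$ in the converse direction, where non-holomorphicity is used---this step is unnecessary if $\lambda$ is known to be nowhere vanishing.
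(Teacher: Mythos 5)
Your proof is correct and follows essentially the same route as the paper's: both reduce $\bbrk{W,\frac{df}{d\bar z}}$ on $\partial D$ to a nonvanishing multiple of $\lambda\big(\langle V,\nu\rangle-i\langle JV,\nu\rangle\big)$ by writing $\frac{df}{d\bar z}$ in polar form, taking the $(1,0)$-projection, and inserting the critical-point boundary condition $\frac{df}{dr}+J\frac{df}{d\theta}=\lambda\nu$. The one place you go beyond the paper is the converse direction, where you correctly observe that the equivalence degenerates on the set $\{\lambda=0\}$ and patch this with a unique-continuation argument using non-holomorphicity; the paper's proof passes over this point in silence, so your version is the more careful of the two.
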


\begin{proof}
On the boundary $\partial D$, $\frac{df}{d\bar{z}}=\alpha(\frac{df}{dr}+i\frac{df}{d\theta})$ for some complex valued function $\alpha$. Then
\begin{align*}
(\frac{df}{dr}+i\frac{df}{d\theta})^{(1,0)}&=\frac{1}{2}\Big[(\frac{df}{dr}+J\frac{df}{d\theta})-iJ(\frac{df}{dr}+J\frac{df}{d\theta})\Big]\\
&=\frac{\lambda}{2}(\nu-iJ\nu)
\end{align*}
In the last equality, we use the fact that $\frac{df}{dr}+J\frac{df}{d\theta}=\lambda\nu$ for some function $\lambda$ as $f$ is critical point to $\overline{\partial}$-energy. Then
\begin{align*}
\bbrk{W,(\frac{df}{dr}+i\frac{df}{d\theta})}&=\bbrk{W,(\frac{df}{dr}+i\frac{df}{d\theta})^{(1,0)}}\\
&=\bbrk{V-iJV, \frac{\lambda}{2}(\nu-iJ\nu)}\\
&=\lambda[\brk{V,\nu}-i\brk{JV,\nu}]
\end{align*}
Therefore, $V$, $JV$ are variation vector fields if and only if $\bbrk{W,\frac{df}{d\bar{z}}}=0$
\end{proof}

The index form $I(V,V)$ extends to a Hermitian symmetric bilinear form on $f^\ast TN \otimes \C$,
\begin{align*}
I(V,V)=\frac{1}{2}\Big[\int_\Sigma &\|\nabla V\|^2-\langle\langle R(V),V\rangle\rangle d\mathcal{A}+\int_{\partial \Sigma}\langle\langle\nabla_V \overline{V},\frac{df}{dr}+J\frac{df}{d\theta}\rangle\rangle d\theta\\
&+\int_{\partial \Sigma}\langle\langle J\nabla_{\frac{d}{d\theta}}V, V\rangle\rangle\;d\theta\Big].
\end{align*}
Following the calculation in \cite{Micallef},
\begin{lemma}\label{indexformula}
If $V$ is a variation vector field to $f$, then
\begin{align*}
I(V,V)=2\int_D&\|\nabla_{\frac{d}{d\bar{z}}}V\|^2-\langle\brk{R(V,\frac{df}{dz})\frac{df}{d\bar{z}},V}\rangle dx\wedge dy+\frac{1}{2}\int_{\partial D}\bbrk{\nabla_V \overline{V}, \frac{df}{dr}+J\frac{df}{d\theta}}d\theta\\
&-\frac{i}{2}\int_{\partial D}\langle\brk{\nabla_{\frac{d}{d\theta}}V+iJV,V}\rangle\;d\theta
\end{align*}
\end{lemma}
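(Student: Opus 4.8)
The plan is to carry out the Bochner-type rearrangement of Micallef in the complexified bundle, keeping careful track of the boundary integrals that the integrations by parts generate. Writing $z=x+iy$ on $D$ and expanding $\nabla_{\frac{d}{dx}}V=\nabla_{\frac{d}{dz}}V+\nabla_{\frac{d}{d\bar z}}V$, $\nabla_{\frac{d}{dy}}V=i(\nabla_{\frac{d}{dz}}V-\nabla_{\frac{d}{d\bar z}}V)$ and $\frac{df}{dx}=\frac{df}{dz}+\frac{df}{d\bar z}$, $\frac{df}{dy}=i(\frac{df}{dz}-\frac{df}{d\bar z})$, a direct computation gives $\|\nabla V\|^2=2\|\nabla_{\frac{d}{dz}}V\|^2+2\|\nabla_{\frac{d}{d\bar z}}V\|^2$ and $R(V)=2R(V,\frac{df}{dz})\frac{df}{d\bar z}+2R(V,\frac{df}{d\bar z})\frac{df}{dz}$, so the interior integrand of \eqref{2ndvar} becomes $\|\nabla_{\frac{d}{dz}}V\|^2+\|\nabla_{\frac{d}{d\bar z}}V\|^2-\bbrk{R(V,\frac{df}{dz})\frac{df}{d\bar z},V}-\bbrk{R(V,\frac{df}{d\bar z})\frac{df}{dz},V}$.

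The main step is to trade the holomorphic derivative for the antiholomorphic one: I would use the pointwise identity $\|\nabla_{\frac{d}{dz}}V\|^2-\|\nabla_{\frac{d}{d\bar z}}V\|^2=\frac{1}{2i}\big(\bbrk{\nabla_{\frac{d}{dy}}V,\nabla_{\frac{d}{dx}}V}-\bbrk{\nabla_{\frac{d}{dx}}V,\nabla_{\frac{d}{dy}}V}\big)$, together with the Ricci commutation formula $\nabla_{\frac{d}{dy}}\nabla_{\frac{d}{dx}}V-\nabla_{\frac{d}{dx}}\nabla_{\frac{d}{dy}}V=-R(\frac{df}{dx},\frac{df}{dy})V=2i\,R(\frac{df}{dz},\frac{df}{d\bar z})V$, to rewrite this difference as $\frac{1}{2i}\big(\partial_y\bbrk{V,\nabla_{\frac{d}{dx}}V}-\partial_x\bbrk{V,\nabla_{\frac{d}{dy}}V}\big)+\bbrk{R(\frac{df}{dz},\frac{df}{d\bar z})V,V}$. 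Integrating over $D$ and applying Green's theorem to the divergence term — using $\frac{d}{d\theta}=-\sin\theta\,\frac{d}{dx}+\cos\theta\,\frac{d}{dy}$ along $\partial D$ — converts it into the boundary integral $\frac{i}{2}\int_{\partial D}\bbrk{V,\nabla_{\frac{d}{d\theta}}V}\,d\theta$ plus $\int_D\bbrk{R(\frac{df}{dz},\frac{df}{d\bar z})V,V}$. Substituting back, the interior term acquires the desired factor $2\int_D\|\nabla_{\frac{d}{d\bar z}}V\|^2$, and the leftover curvature density $\bbrk{R(\frac{df}{dz},\frac{df}{d\bar z})V,V}-\bbrk{R(V,\frac{df}{dz})\frac{df}{d\bar z},V}-\bbrk{R(V,\frac{df}{d\bar z})\frac{df}{dz},V}$ collapses, upon applying the first Bianchi identity in the form $R(V,\frac{df}{d\bar z})\frac{df}{dz}=R(\frac{df}{dz},\frac{df}{d\bar z})V+R(V,\frac{df}{dz})\frac{df}{d\bar z}$, to the single curvature term $\bbrk{R(V,\frac{df}{dz})\frac{df}{d\bar z},V}$ of the statement.

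It remains to reassemble the boundary integrals: the term $\frac{1}{2}\int_{\partial D}\bbrk{\nabla_V\overline{V},\frac{df}{dr}+J\frac{df}{d\theta}}\,d\theta$ from \eqref{2ndvar} already appears in the statement, and the remaining contributions are $\frac{1}{2}\int_{\partial D}\bbrk{J\nabla_{\frac{d}{d\theta}}V,V}\,d\theta$ (also from \eqref{2ndvar}) together with the new $\frac{i}{2}\int_{\partial D}\bbrk{V,\nabla_{\frac{d}{d\theta}}V}\,d\theta$ produced above. Since $N$ is K\"ahler, $J$ is parallel, so $\nabla_{\frac{d}{d\theta}}(JV)=J\nabla_{\frac{d}{d\theta}}V$, and $J$ is skew for $\bbrk{\,,}$; using these, the periodicity relations $\int_{\partial D}\partial_\theta\bbrk{JV,V}\,d\theta=\int_{\partial D}\partial_\theta\bbrk{V,V}\,d\theta=0$, the critical-point condition $\frac{df}{dr}+J\frac{df}{d\theta}=\lambda\nu$, and the admissibility relation $\bbrk{V-iJV,\frac{df}{d\bar z}}=0$ of Proposition \ref{varfield} along $\partial D$, one regroups $\frac{1}{2}\bbrk{J\nabla_{\frac{d}{d\theta}}V,V}+\frac{i}{2}\bbrk{V,\nabla_{\frac{d}{d\theta}}V}$ into $-\frac{i}{2}\bbrk{\nabla_{\frac{d}{d\theta}}V+iJV,V}$. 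This last regrouping of boundary integrals has no analogue in the closed-surface (or fixed-boundary-curve) setting, and it is the step where the free-boundary structure is genuinely used; I expect it to be the most delicate part of the proof, the interior Bochner computation and the Bianchi bookkeeping being essentially routine once the boundary terms are isolated.
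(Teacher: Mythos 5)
Your proposal is correct and follows essentially the same route as the paper: the Micallef-type rearrangement trading $\|\nabla_{\frac{d}{dz}}V\|^2$ for $\|\nabla_{\frac{d}{d\bar z}}V\|^2$ via one integration by parts, Stokes/Green to produce the extra boundary term $\pm\frac{i}{2}\int_{\partial D}\bbrk{\nabla_{\frac{d}{d\theta}}V,V}\,d\theta$, and the first Bianchi identity to collapse the three curvature densities to $-2\bbrk{R(V,\frac{df}{dz})\frac{df}{d\bar z},V}$ (your carrying out the divergence step in $(x,y)$ rather than $(z,\bar z)$ coordinates is only a cosmetic difference). The one inaccuracy is your expectation that the final boundary regrouping is the delicate step requiring the critical-point condition and the admissibility relation of Proposition \ref{varfield}: in fact $\frac{1}{2}\bbrk{J\nabla_{\frac{d}{d\theta}}V,V}-\frac{i}{2}\bbrk{\nabla_{\frac{d}{d\theta}}V,V}=-\frac{i}{2}\bbrk{\nabla_{\frac{d}{d\theta}}V+iJ\nabla_{\frac{d}{d\theta}}V,V}$ is a purely algebraic identity (needing at most the periodicity of $\bbrk{V,V}$ to flip your conjugated boundary term), and neither the boundary condition nor admissibility enters this lemma at all.
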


\begin{proof}
By direct computation,
\begin{align*}
\|\nabla_{\frac{d}{dz}}V\|^2+\|\nabla_{\frac{d}{d\bar{z}}}V\|^2=\frac{1}{2}\Big[\|\nabla_{\frac{d}{dx}}V\|^2+\|\nabla_{\frac{d}{dy}}V\|^2\Big]
\end{align*}
and
\begin{align*}
&\bbrk{R(V,\frac{df}{dz})\frac{df}{d\bar{z}},V}+\bbrk{R(V,\frac{df}{d\bar{z}})\frac{df}{dz},V}\\
=&\frac{1}{2}\Big[\bbrk{R(V,\frac{df}{dx})\frac{df}{dx},V}+\bbrk{R(V,\frac{df}{dy})\frac{df}{dy},V}\Big].
\end{align*}
Then we have
\begin{equation}\label{2ndvar2}
\begin{split}
I((V,V)=&\int_{D}\|\nabla_{\frac{d}{dz}}V\|^2+\|\nabla_{\frac{d}{d\bar{z}}}V\|^2-\bbrk{R(V,\frac{df}{dz})\frac{df}{d\bar{z}},V}\\
&\qquad-\bbrk{R(V,\frac{df}{d\bar{z}})\frac{df}{dz},V} dx\wedge dy+\frac{1}{2}\int_{\partial D}\bbrk{\nabla_V \overline{V},\frac{df}{dr}+J\frac{df}{d\theta}}d\theta\\
&\qquad+\frac{1}{2}\int_{\partial D}\bbrk{J\nabla_{\frac{d}{d\theta}}V,V}d\theta
\end{split}
\end{equation}
As we have
\begin{align*}
\|\nabla_{\frac{d}{dz}}V\|^2&=(\nabla_{\frac{d}{dz}}V,\nabla_{\frac{d}{d\bar{z}}}\overline{V})\\
&=\frac{d}{d\bar{z}}(\nabla_{\frac{d}{dz}}V,\overline{V})-(\nabla_{\frac{d}{d\bar{z}}}\nabla_{\frac{d}{dz}}V,\overline{V})\\
&=\frac{d}{d\bar{z}}(\nabla_{\frac{d}{dz}}V,\overline{V})-(\nabla_{\frac{d}{dz}}\nabla_{\frac{d}{d\bar{z}}}V,\overline{V})-(R(\frac{df}{d\bar{z}},\frac{df}{dz})V,\overline{V})\\
&=\frac{d}{d\bar{z}}(\nabla_{\frac{d}{dz}}V,\overline{V})-\frac{d}{dz}(\nabla_{\frac{d}{d\bar{z}}}V,\overline{V})+\|\nabla_{\frac{d}{d\bar{z}}}V\|^2-\bbrk{R(\frac{df}{d\bar{z}},\frac{df}{dz})V,\overline{V}},
\end{align*}
using this in \eqref{2ndvar2}, we get
\begin{align*}
I((V,V)=&\int_{D}2\|\nabla_{\frac{d}{d\bar{z}}}V\|^2+\frac{d}{d\bar{z}}(\nabla_{\frac{d}{dz}}V,\overline{V})-\frac{d}{dz}(\nabla_{\frac{d}{d\bar{z}}}V,\overline{V})\\
&\qquad-\bbrk{R(\frac{df}{d\bar{z}},\frac{df}{dz})V,V}-\bbrk{R(V,\frac{df}{dz})\frac{df}{d\bar{z}},V}\\
&\qquad-\bbrk{R(V,\frac{df}{d\bar{z}})\frac{df}{dz},V} dx\wedge dy+\frac{1}{2}\int_{\partial D}\bbrk{\nabla_V \overline{V},\frac{df}{dr}+J\frac{df}{d\theta}}d\theta\\
&\qquad+\frac{1}{2}\int_{\partial D}\bbrk{J\nabla_{\frac{d}{d\theta}}V,V}d\theta.
\end{align*}
By Stoke's theorem,
\[\int_{D}\frac{d}{d\bar{z}}(\nabla_{\frac{d}{dz}}V,\overline{V})-\frac{d}{dz}(\nabla_{\frac{d}{d\bar{z}}}V,\overline{V})=-\frac{i}{2}\int_{\partial D}\bbrk{\nabla_{\frac{d}{d\theta}}V,V}\;d\theta\]
and Bianchi identities
\[-\bbrk{R(\frac{df}{d\bar{z}},\frac{df}{dz})V,V}-\bbrk{R(V,\frac{df}{d\bar{z}})\frac{df}{dz},V}=\bbrk{R(\frac{df}{dz},V)\frac{df}{d\bar{z}},V},\]
we finally have the equality
\begin{align*}
I(V,V)&=2\int_{D}\|\nabla_{\frac{d}{d\bar{z}}}V\|^2-\bbrk{R(V,\frac{df}{dz})\frac{df}{d\bar{z}},V}dx\wedge dy\\
&\qquad+\frac{1}{2}\int_{\partial D}\bbrk{\nabla_V \overline{V},\frac{df}{dr}+J\frac{df}{d\theta}}\;d\theta+\frac{1}{2}\int_{\partial D}\bbrk{J\nabla_{\frac{d}{d\theta}}V,V}\;d\theta\\
&\qquad-\frac{i}{2}\int_{\partial D}\bbrk{\nabla_{\frac{d}{d\theta}}V,V}\;d\theta\\
&=2\int_D\|\nabla_{\frac{d}{d\bar{z}}}V\|^2-\langle\brk{R(V,\frac{df}{dz})\frac{df}{d\bar{z}},V}\rangle dx\wedge dy+\frac{1}{2}\int_{\partial D}\bbrk{\nabla_V \overline{V}, \frac{df}{dr}+J\frac{df}{d\theta}}d\theta\\
&\qquad-\frac{i}{2}\int_{\partial D}\langle\brk{\nabla_{\frac{d}{d\theta}}V+iJV,V}\rangle\;d\theta
\end{align*}
\end{proof}
From the formula and Proposition \ref{varfield}, index form would take negative values for any section V of $f^\ast T^{(1,0)}N$ satisfying $\bbrk{V,\frac{df}{d\bar{z}}}=0$ and $\nabla_{\frac{d}{d\bar{z}}}V=0$. For those sections, the last term vanishes as it is in $f^\ast T^{(1,0)}N$. The curvature term also vanishes as the metric is flat in $C^n$. The remaning terms would be negative because of pseudoconvexity.

\section{Genus Zero with Single Boundary Component}
In this section, we prove conformality theorem for the critical points of $\overline{\partial}$-energy by changing the metric on the disk $D$. As a consequence, it gives the direction of $\frac{df}{dr}+J\frac{df}{d\theta}$ in first and second variation formula. We then give the construction of holomorphic variations and index theorem for $\overline{\partial}$-energy.
\begin{prop}\label{outward}
If $f:D\to N$ is critical to $\overline{\partial}$-energy, then $f$ is harmonic and weakly conformal. Moreover, we have boundary condition that $\frac{df}{dr}+J\frac{df}{d\theta}=\lambda\nu$ for some nonnegative function $\lambda$ on the boundary $\partial \Sigma$.
\end{prop}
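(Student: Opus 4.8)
The plan rests on three ingredients, handled separately: the harmonic map equation together with the boundary equation $\frac{df}{dr}+J\frac{df}{d\theta}=\lambda\nu$, which both follow from the first variation formula of Section 2.1; weak conformality, which I would deduce from a Hopf differential argument; and the nonnegativity of $\lambda$, which follows by combining conformality with the defining function of $N$. For the first point, the first variation reads $\frac{d}{dt}|_{t=0}E''(f_t)=\int_D\brk{V,\nabla_{\frac{d}{d\bar z}}\frac{df}{dz}}\,dx\wedge dy+\int_{\partial D}\brk{V,\frac{df}{dr}+J\frac{df}{d\theta}}\,d\theta$; taking variation fields $V$ compactly supported in the interior of $D$ removes the boundary term, and vanishing for all such $V$ forces $\nabla_{\frac{d}{d\bar z}}\frac{df}{dz}=0$, so $f$ is harmonic. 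Then, for variation fields $V$ along $\partial D$ everywhere tangent to $\partial N$ (which arise from admissible variations, as they keep $f(\partial D)$ in $\partial N$ to first order), the first variation reduces to $\int_{\partial D}\brk{V,\frac{df}{dr}+J\frac{df}{d\theta}}\,d\theta=0$, so $\frac{df}{dr}+J\frac{df}{d\theta}$ is orthogonal to $T\partial N$ along $f(\partial D)$, i.e.\ equals $\lambda\nu$ for a real function $\lambda$ on $\partial D$.

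For weak conformality I would introduce the Hopf differential $\Phi=(\frac{df}{dz},\frac{df}{dz})\,dz^2$, where $(\,\cdot\,,\,\cdot\,)$ denotes the complex bilinear extension of the metric. Since $\nabla$ is metric and $f$ is harmonic, $\partial_{\bar z}(\frac{df}{dz},\frac{df}{dz})=2(\nabla_{\frac{d}{d\bar z}}\frac{df}{dz},\frac{df}{dz})=0$, so $\Phi$ is a holomorphic quadratic differential on $D$, smooth up to $\partial D$. The crucial step is the boundary behaviour of $\Phi$. On $\partial D$ one has $\frac{df}{dz}=\tfrac12 e^{-i\theta}(\frac{df}{dr}-i\frac{df}{d\theta})$; substituting $\frac{df}{dr}=\lambda\nu-J\frac{df}{d\theta}$ and using that $J$ is skew-symmetric and an isometry, together with $\brk{\frac{df}{d\theta},\nu}=0$ (since $\frac{df}{d\theta}$ is tangent to $\partial N$), one finds that the cross term $\brk{\frac{df}{dr},\frac{df}{d\theta}}$ vanishes while $|\frac{df}{dr}|^2-|\frac{df}{d\theta}|^2=\lambda^2-2\lambda\brk{\nu,J\frac{df}{d\theta}}$ is real; since also $dz^2=-e^{2i\theta}(d\theta)^2$ on $\partial D$, it follows that $\Phi=-\tfrac14(\lambda^2-2\lambda\brk{\nu,J\frac{df}{d\theta}})(d\theta)^2$ is a real quadratic differential along $\partial D$. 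A holomorphic quadratic differential on $D$ that is smooth and real on $\partial D$ extends, by Schwarz reflection across $\partial D$, to a holomorphic quadratic differential on the double $\C P^1$ of $\overline{D}$; since $\deg(K_{\C P^1}^{\otimes2})=-4<0$, the sphere $\C P^1$ carries no nonzero holomorphic quadratic differential, hence $\Phi\equiv0$ on $D$. This is equivalent to $|\frac{df}{dx}|^2=|\frac{df}{dy}|^2$ and $\brk{\frac{df}{dx},\frac{df}{dy}}=0$, so $f$ is weakly conformal.

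Finally, weak conformality gives $|\frac{df}{dr}|=|\frac{df}{d\theta}|$ on $\partial D$, so the identity above yields $\lambda^2=2\lambda\brk{\nu,J\frac{df}{d\theta}}$, whence, at points where $\lambda\neq0$, $\brk{\frac{df}{dr},\nu}=\brk{\lambda\nu-J\frac{df}{d\theta},\nu}=\lambda-\tfrac12\lambda=\tfrac12\lambda$. On the other hand, if $\rho$ is the defining function of $N$, then $\rho\circ f\leq0$ on $\overline{D}$ with equality on $\partial D$, so the one-sided radial derivative $\partial_r(\rho\circ f)=\brk{\nabla\rho,\frac{df}{dr}}=|\nabla\rho|\,\brk{\nu,\frac{df}{dr}}$ is $\geq0$ on $\partial D$; hence $\brk{\nu,\frac{df}{dr}}\geq0$, and therefore $\lambda=2\brk{\nu,\frac{df}{dr}}\geq0$ wherever $\lambda\neq0$, while $\lambda\geq0$ holds trivially where $\lambda=0$. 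I expect the main obstacle to be this boundary computation: one must use precisely the $\overline{\partial}$-boundary condition $\frac{df}{dr}+J\frac{df}{d\theta}=\lambda\nu$ (the $\partial$-energy or the full-energy boundary conditions would not make $\Phi$ real on $\partial D$) and exploit the tangency of $\frac{df}{d\theta}$ to $\partial N$ to kill the imaginary part of $(\frac{df}{dz},\frac{df}{dz})$ on $\partial D$; the remaining pieces are either immediate from the first variation formula or are the classical reflection argument for holomorphic quadratic differentials on the disk.
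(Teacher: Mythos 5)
Your argument is correct, and for the conformality step it takes a genuinely different route from the paper. The harmonicity and the boundary condition $\frac{df}{dr}+J\frac{df}{d\theta}=\lambda\nu$ are obtained exactly as in the paper, from interior and tangential boundary variations in the first variation formula. For weak conformality, however, the paper does not use the Hopf differential: it varies the metric $g_t=\delta+tT$ on the domain, invokes the uniformization theorem to produce conformal maps $h_t:(D,\delta)\to(D,g_t)$, uses that $E$ and $E''$ differ by the reparametrization-invariant term $\int_D f^\ast\omega^N$, and concludes that the energy--momentum tensor vanishes, i.e.\ $A=\eta I$. Your Hopf-differential argument reaches the same conclusion more directly: holomorphy of $\Phi$ from harmonicity, realness of $\Phi$ on $\partial D$ from the boundary condition (the only input needed is $\brk{\frac{df}{dr},\frac{df}{d\theta}}=0$, which you correctly extract from tangency of $\frac{df}{d\theta}$ and skewness of $J$), and vanishing by reflection to the double $\C P^1$. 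This buys you independence from the uniformization theorem and from the bookkeeping of admissible metric variations, at the cost of needing $\Phi$ continuous up to $\partial D$ (harmless here since $f$ is assumed smooth). One inaccuracy in an aside: you claim the full-energy and $\partial$-energy boundary conditions would not make $\Phi$ real on $\partial D$; in fact they do ($\frac{df}{dr}=\mu\nu$ gives $\brk{\frac{df}{dr},\frac{df}{d\theta}}=0$ directly, and $\frac{df}{dr}=\lambda\nu+J\frac{df}{d\theta}$ likewise), which is why critical disks for all three energies are weakly conformal; this does not affect your proof. Your derivation of $\lambda\geq0$ is essentially the paper's (conformality plus $\rho\circ f\leq0$ forcing $\brk{\frac{df}{dr},\nu}\geq0$), and is in fact slightly tighter, since you obtain the exact identity $\lambda=2\brk{\frac{df}{dr},\nu}$ where the paper only records $\lambda\brk{\frac{df}{dr},\nu}\geq0$.
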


\begin{proof}
By the first variation formula, if $f$ is a critical point of $\overline{\partial}$-energy, then
\[0=\int_D \brk{V,\nabla_{\frac{d}{d\bar{z}}} \frac{df}{dz}}\;dx\wedge dy+\int_{\partial D}\brk{V,\frac{df}{dr}+J\frac{df}{d\theta}}\;d\theta\]
where $V$ is a variation vector field on $\Sigma$. If we choose $V$ with compact support on $\Sigma$, then the boundary terms is zero and $\int_D\brk{V,\nabla_{\frac{d}{d\bar{z}}} \frac{df}{dz}}\;dx\wedge dy=0$ for any $V$ with compact support. This implies $\nabla_{\frac{d}{d\bar{z}}} \frac{df}{dz}=0$ on $D$ and hence $f$ is harmonic.

Now suppose that we change the metric on the disk instead of varying the map $f$. Let $g_t=\delta+t T$ be the metrics on the disk for small time $t$, where $T$ is arbitrary symmetric two tensor. For each $t$, by uniformization theorem, there exists a conformal map $h_t:(D, \delta)\to(D, g_t)$. Since the total energy is preserved by conformal mapping, we have $E(f\circ h_t, \delta)=E(f, g_t)$. On the other hand, $E(f\circ h_t, \delta)=\frac{1}{2}E''(f\circ h_t, \delta)-\frac{1}{2}\omega(N)[f(D)]$. Since the pull-back K\"{a}hler form is diffeomorphic invariance under fixed boundary, it gives
\[\frac{d}{dt}|_{t=0}E(f\circ h_t, \delta)=\frac{d}{dt}|_{t=0}\frac{1}{2}E''(f\circ h_t,\delta)+0\]
As $f$ is a critical point of $\overline{\partial}$-energy, we then have
\begin{align*}
0=\frac{d}{dt}|_{t=0}E(f\circ h_t, \delta)&=\frac{d}{dt}|_{t=0}E(f, g_t)\\
&=-\int_D tr(AT)+\frac{1}{2}tr(A)tr(T)\;dx\wedge dy
\end{align*}
where $tr(M)$ is trace of $M$ and $(A_{ij})=\begin{pmatrix} \brk{\frac{df}{dx},\frac{df}{dx}} & \brk{\frac{df}{dx},\frac{df}{dy}}\\ \brk{\frac{df}{dx},\frac{df}{dy}} & \brk{\frac{df}{dy},\frac{df}{dy}}\end{pmatrix}$. Letting $T=A$ into the formula, it give $A=\eta I$ for some positive function $\eta$. Therefore, $f$ is weakly conformal.

Next from the first variation formular, we have $\frac{df}{dr}+J\frac{df}{d\theta}=\lambda\nu$ for some function $\lambda$ on the boundary $\partial \Sigma$. We are going to show that $\lambda$ is nonnegative. Take the inner product with $\frac{df}{dr}$ on both sides, we get
\[|\frac{df}{dr}|^2-|\frac{df}{d\theta}||\frac{df}{dr}|\leq |\frac{df}{dr}|^2+\brk{J\frac{df}{d\theta},\frac{df}{dr}}=\lambda\brk{\frac{df}{dr},\nu}.\]
By the conformality of $f$, we have $|\frac{df}{d\theta}|=|\frac{df}{dr}|$. And hence
\[0\leq \lambda\brk{\frac{df}{dr},\nu}\]
As the image of $D$ is inside the region $N$, $\brk{\frac{df}{dr},\nu}$ is non-negative. Therefore, $\lambda$ is also non-negative.
\end{proof}

The index estimates is based on the construction of (1,0) holomorphic sections. Any such sections would give negative values on the index form. The construction is divided into two part. The first one is the existence of at least $2n$ holomorphic sections which are real on the boundary. We use the technique in \cite{Fraser}
to show the existence considering a system of partial differential equations. The difference is that we consider the pullback tangent bundle $f^\ast TN\otimes \C$ instead of the normal bundle so that $2n$ would be the lower bound. The second part is the construction of (1,0) holomorphic section using those $2n$ holomorphic section.

\subsection{Holomorphic sections}
The bundle $f^\ast(TN)$ over $D$ is topologically trival and, therefore, admit global basis $s_1,\ldots, s_{2n}$, where $n=dim_{\C}N$. Any section of $f^\ast{TN}\otimes \C$ can be written as $W=\sum_{i=1}^{2n} f^i s_i$, where $f^i$ are complex valued function. By Malgrange theorem, $f^\ast{TN}\otimes \C$ admit a holomorphic structure and $\overline{\partial} W= \nabla_{\overline{z}} W d\overline{z}$.

Let $\overline{\partial}s_i=a_{ij}s_j$, where $a_{ij}$ is (0,1) form. We are looking for section $W$ satisfies
\[\begin{array}{rl} \overline{\partial}W=0 & \text{on $\Sigma$} \\ Im\;W=0 & \text{on $\partial \Sigma$}\end{array}\]
The corresponding system of partial differential equations is 
\[\begin{array}{rl} \overline{\partial}f^i+\displaystyle\sum_{j=1}^{2n} f^ja_{ji}=0 & \text{for all $i$ on $D$} \\ Im\;f^i=0 & \text{on $\partial D$}\end{array}\]

Consider the operator $A:\mathcal{V}\to L^2_{(0,1)}(\Sigma)$ defined by
\[ (Af)_i=\overline{\partial}f^i+\displaystyle\sum_{j=1}^{2n} a_{ji}f^j\]
where $\mathcal{V}=\{f\in H^1(\Sigma,\C^n)\;|\;Im\;f=0\;\;\text{on $\partial \Sigma$}\}$ and $L^2_{(0,1)}(\Sigma)$ is the space of $(0,1)$-form on $\Sigma$ in $L^2$ class. Any element in the kernel of $A$ corresponds to holomorphic section that is real on the boundary and we have the following lemma
in estimating the dimension of solutions space.
\begin{lemma}{(cf. \cite{Fraser2})}\label{holosection}
The operator $A$ is Fredholm operator and the index of $A$ is $2n$. It follows that the dimension of $\{W\in\Gamma(f^\ast(TN)\otimes\C)\; : \;\overline{\partial}W=0 \;\;\text{on $\Sigma$}, Im(W)=0\;\;\text{on $\partial \Sigma$}\}$ is at least $2n$ over $\R$.
\end{lemma}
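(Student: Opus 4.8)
The plan is to show that $A$ is a Fredholm operator of index $2n$ by deforming to an explicitly solvable model, and then to read off the dimension bound from the Fredholm index. First I would check that $A\colon \mathcal{V}\to L^2_{(0,1)}(\Sigma)$ is bounded and has closed range with finite-dimensional kernel and cokernel; this is the standard elliptic boundary-value package, since $\overline{\partial}$ with the totally real boundary condition $\operatorname{Im} f = 0$ on $\partial\Sigma$ satisfies the Shapiro–Lopatinski (complementing) condition. The condition $\operatorname{Im} f^i = 0$ says that each $f^i$, viewed as a map to $\C$, takes the boundary circle into the totally real line $\R\subset\C$; this is precisely the kind of boundary condition treated in \cite{Fraser} and \cite{Fraser2}, and I would cite that ellipticity/Fredholm setup rather than reprove it. The zeroth-order term $\sum_j a_{ji} f^j$ is a compact perturbation of $\overline{\partial}$ acting on $\mathcal{V}$ (it is order zero, hence lower-order relative to $\overline{\partial}$ on the disk, and factors through the compact embedding $H^1\hookrightarrow L^2$), so $A$ and the pure operator $f\mapsto \overline{\partial}f$ on $\mathcal{V}$ have the same Fredholm index.

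Next I would compute the index of the model operator. On the disk $D$, the bundle $f^\ast(TN)\otimes\C$ is topologically trivial, so after the global trivialization $s_1,\dots,s_{2n}$ the problem decouples into $2n$ scalar Riemann–Hilbert problems: find $f^i\in H^1(D,\C)$ with $\overline{\partial}f^i = 0$ in $D$ and $f^i(\partial D)\subset\R$. For a single scalar unknown, a holomorphic function on the disk that is real on the boundary is, by the Schwarz reflection principle (or directly by expanding in a Fourier/power series and matching the reality constraint), a real constant; thus the kernel of each scalar problem is $1$-dimensional over $\R$ and the cokernel is $0$ (this is the standard computation that the Riemann–Hilbert problem with winding number zero has index $1$). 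Summing over the $2n$ factors gives $\operatorname{index}(A) = 2n$. Since $\operatorname{index}(A) = \dim_\R\ker A - \dim_\R\operatorname{coker} A \le \dim_\R\ker A$, the kernel has real dimension at least $2n$; as elements of $\ker A$ are exactly the sections $W\in\Gamma(f^\ast(TN)\otimes\C)$ with $\overline{\partial}W = 0$ on $\Sigma$ and $\operatorname{Im}(W) = 0$ on $\partial\Sigma$, the stated dimension bound follows.

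The main obstacle is verifying that the totally real boundary condition is genuinely elliptic for $\overline{\partial}$ and correctly identifying the index of the associated scalar Riemann–Hilbert problem — in particular being careful that the relevant Maslov/winding index of the boundary Lagrangian (here the constant real line) is zero, which is what makes the index equal to $2n$ rather than something shifted; one must also confirm that trivializing the bundle does not introduce a nontrivial winding into the boundary condition, which holds because $f^\ast(TN)$ and its boundary data are trivial over the disk. Once the boundary condition is set up correctly this is essentially \cite[proof of the analogous lemma]{Fraser2} with the normal bundle replaced by the full pullback tangent bundle, so I would present the argument by reduction to that reference and include only the index bookkeeping that differs (namely that the rank is $2n$ rather than the rank of the normal bundle).
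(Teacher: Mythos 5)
Your proposal is correct and follows essentially the same route as the paper, which simply defers to the index computation in Fraser's work and notes that the only change is replacing the rank-$(n-2)$ normal bundle by the rank-$n$ pullback tangent bundle (complex rank $2n$ after complexification, real boundary condition of Maslov index zero). You additionally spell out the details the paper leaves to the reference --- compact perturbation by the zeroth-order term, decoupling into scalar Riemann--Hilbert problems of winding number zero each contributing index $1$ --- and these details are accurate.
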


\begin{proof}
It follows from the index estimation of the operator $A$ in (\cite{Fraser},\cite{Fraser2})
. In our case, we consider the domain in $H^1(\Sigma, \C^n)$ instead of $H^1(\Sigma, \C^{n-2})$
\end{proof}

\subsection{Index Estimates}
We now prove the index theorem using those holomorphic sections in Lemma \ref{holosection}. Let $W_1, \ldots, W_{2n}$ be linearly independent solution over $\R$ to lemma \ref{holosection}. Since $(W_i, W_j)$ is holomorphic function on $\Sigma$ and real on the boundary, it is real constant over $\Sigma$. Moreover, if $W_i$ is zero on the boundary, $W_i$ is zero over $\Sigma$ as it is holomorphic. Apply rescaling and rotation if necessary, we  then select a subset $\{W_{i_1},\ldots, W_{i_n}\}$ of $\{W_1, \ldots, W_{2n}\}$ such that $\{W_{i_1}, JW_{i_1}\ldots, W_{i_n}, JW_{i_n}\}$ is a orthonormal basis of $f^\ast TN$ on the boundary

\begin{lemma}\label{holosection2}
Let $V_{i_j}=W_{i_j} -iJW_{i_j}$. Then we have the following:
\begin{enumerate}[label=\arabic*.]
\item $V_{i_j}$ is non vanishing except finitely many points.

\item $V_{i_1},\ldots, V_{i_n}$ are linearly independent over $\C$.
\end{enumerate}
\end{lemma}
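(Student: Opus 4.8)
The plan is to reduce the statement to two ingredients: that each $V_{i_j}=W_{i_j}-iJW_{i_j}$ is a holomorphic section of $f^\ast TN\otimes\C$, and that the boundary values of $W_{i_j}$ and $JW_{i_j}$ constitute an orthonormal frame of the real bundle $f^\ast TN$ over $\partial D$. First I would record the holomorphicity: since $W_{i_j}$ solves $\nabla_{\frac{d}{d\bar{z}}}W_{i_j}=0$ and the ambient metric is flat (or, more generally, K\"ahler), the complex structure $J$ is parallel, so $\nabla_{\frac{d}{d\bar{z}}}(JW_{i_j})=J\nabla_{\frac{d}{d\bar{z}}}W_{i_j}=0$ and hence $\nabla_{\frac{d}{d\bar{z}}}V_{i_j}=0$; with respect to the Malgrange holomorphic structure this says $\overline{\partial}V_{i_j}=0$. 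It is also immediate that $JV_{i_j}=JW_{i_j}+iW_{i_j}=iV_{i_j}$, so $V_{i_j}\in\Gamma(f^\ast T^{(1,0)}N)$, which is what makes these sections relevant to the index form.

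For (1), I would first observe that $V_{i_j}$ has no zero on $\partial D$: at any $p\in\partial D$ the vector $W_{i_j}(p)$ is real and nonzero, being a member of an orthonormal basis, while $iJW_{i_j}(p)$ is purely imaginary, so $V_{i_j}(p)=W_{i_j}(p)-iJW_{i_j}(p)\neq 0$; equivalently $\bbrk{V_{i_j},V_{i_j}}=\|W_{i_j}\|^2+\|JW_{i_j}\|^2=2$ on $\partial D$. In particular $V_{i_j}\not\equiv 0$ on $D$, so in any local holomorphic trivialization of $f^\ast TN\otimes\C$ the components of $V_{i_j}$ are holomorphic functions, not all identically zero (using connectedness of $D$ and the identity theorem), and therefore the zero set of $V_{i_j}$ is discrete in $D$. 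Since $V_{i_j}$ extends continuously to $\overline D$ and is nonvanishing on $\partial D$, this zero set cannot accumulate at $\partial D$ either, so by compactness of $\overline D$ it is finite.

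For (2), suppose $\sum_{k=1}^n c_kV_{i_k}\equiv 0$ on $D$ with $c_k\in\C$, and evaluate at a point $p\in\partial D$. Writing $c_k=a_k+ib_k$ with $a_k,b_k\in\R$ and expanding, the vanishing of $\sum_k c_k(W_{i_k}-iJW_{i_k})$ at $p$ splits, upon taking real and imaginary parts (legitimate because $W_{i_k}(p)$ and $JW_{i_k}(p)$ are real vectors), into
\[\sum_{k=1}^n\big(a_kW_{i_k}(p)+b_kJW_{i_k}(p)\big)=0,\qquad \sum_{k=1}^n\big(b_kW_{i_k}(p)-a_kJW_{i_k}(p)\big)=0.\]
Since $\{W_{i_1}(p),JW_{i_1}(p),\dots,W_{i_n}(p),JW_{i_n}(p)\}$ is a basis of $(f^\ast TN)_p$, the first identity already forces $a_k=b_k=0$ for all $k$, hence $c_k=0$. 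In fact this shows $V_{i_1},\dots,V_{i_n}$ are $\C$-linearly independent at every point of $\partial D$.

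I do not expect a serious obstacle: the only point requiring care is the isolated-zeros step in (1), where one must invoke the Malgrange holomorphic structure so that the local coefficients of $V_{i_j}$ are honestly holomorphic, and then combine the identity theorem on the connected disk with boundary nonvanishing and compactness to pass from "discrete zero set" to "finite zero set". The remainder is linear algebra with the orthonormal boundary frame, together with the fact (already noted in the text) that $J$ being parallel makes $JW_{i_j}$, and hence $V_{i_j}$, holomorphic.
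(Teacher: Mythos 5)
Your proposal is correct and follows essentially the same route as the paper: holomorphicity of $V_{i_j}$ from $\nabla_{\frac{d}{d\bar z}}W_{i_j}=0$ and parallelism of $J$, nonvanishing on $\partial D$ plus the identity theorem for part (1), and the real/imaginary split against the orthonormal boundary frame for part (2). The extra care you take in part (1) --- ruling out accumulation of zeros at the boundary before invoking compactness of $\overline D$ --- is a detail the paper leaves implicit, but it is the same argument.
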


\begin{proof}
By the construction, $\nabla_{\frac{d}{d\bar{z}}}V_{i_j}=\nabla_{\frac{d}{d\bar{z}}}W_{i_j}-iJ\nabla_{\frac{d}{d\bar{z}}}W_{i_j}=0$, so it is holomorphic section over $\Sigma$.
\begin{enumerate}[label=\arabic*.]
\item $V_{i_j}\neq0$ on the boundary since $W_{i_j}$ is real and nonzero on the boundary. Therefore, $V_{i_j}$ only has finitely many zero points over $\Sigma$ since it is holomorphic by the construction.
\item Suppose $\sum_{k=1}^n c_kV_{i_k}=0$, $c_k=u_k+iv_k$ are complex numbers. Then on the boundary,
\[\sum_{k=1}^n(u_k+iv_k)(W_{i_k}-iJW_{i_k})=0\]
Considering the real part of the equation, we have
\[u_kW_{i_k}+v_kJW_{i_k}=0.\]
We must have $u_k=v_k=0$ for all $k$ since  $\{W_{i_1}, JW_{i_1}\ldots, W_{i_n}, JW_{i_n}\}$ is basis on the boundary
\end{enumerate}
\end{proof}

Now we are ready to prove the index theorem.

\begin{proof}[Proof of theorem \ref{maintheorem}]
By Lemma \eqref{holosection2}, we have a linearly independent set $\{V_{i_1},\ldots, V_{i_n}\}$ over $\C$ and each $V_{i_j}$ is holomorphic section of the holomorphic tangent bundle. By the assumption $f$ is not holomorphic, then $\frac{df}{d\bar{z}}^{(1,0)}$ is not vanishing over $\Sigma$. We have $\bbrk{V_{i_k},\frac{df}{d\bar{z}}}$ not identically zero for some $k$ since $\{W_{i_1}, JW_{i_1}\ldots, W_{i_n}, JW_{i_n}\}$ is a orthonormal basis on the boundary. Then we define $U_j=\bbrk{V_{i_k},\frac{df}{d\bar{z}}}V_{i_j}-\bbrk{V_{i_j},\frac{df}{d\bar{z}}}V_{i_k}$ for each $j\neq k$. $U_j$ is orthogonal to $\frac{df}{d\bar{z}}$ with respect to $\bbrk{\;,\;}$, so Re$(U_j)$ and Im$(U_j)$ are variation vector fields by Proposition \eqref{varfield}. And by lemma \ref{indexformula},
\begin{align*}
I(U_j,U_j)&=2\int_D\|\nabla_{\frac{d}{d\bar{z}}}U_j\|^2-\langle\brk{R(U_j,\frac{df}{dz})\frac{df}{d\bar{z}},U_j}\rangle dx\wedge dy\\
&\;\;\;+\frac{1}{2}\int_{\partial D}\bbrk{\nabla_{U_j} \overline{U_j}, \frac{df}{dr}+J\frac{df}{d\theta}}d\theta-\frac{i}{2}\int_{\partial D}\langle\brk{\nabla_{\frac{d}{d\theta}}U_j+iJU_j,U_j}\rangle\;d\theta
\end{align*}
As $\nabla_{\frac{d}{d\bar{z}}}U_j=0$ and $U_j$ is a section of the holomorphic tangent bundle, the only nonvanishing term is $\bbrk{\nabla_{U_j} \overline{U_j}, \frac{df}{dr}+J\frac{df}{d\theta}}$. By proposition \ref{outward}, $\frac{df}{dr}+J\frac{df}{d\theta}=\lambda\nu$ for some nonnegative funtion $\lambda$ on the boundary and hence we have $\bbrk{\nabla_{U_j} \overline{U_j}, \frac{df}{dr}+J\frac{df}{d\theta}}<0$ by the pseudoconvexity. Then 
$0>I(U_j, U_j)=I(\text{Re}(U_j),\text{Re}(U_j) )+I(\text{Im}(U_j),\text{Im}(U_j))$. Hence either Re$(U_j)$ or Im$(U_j)$ would give negative value in index form $I(\cdot,\cdot)$. Moreover, $\{V_{i_1},\ldots, V_{i_n}\}$ is linerly independent over $\C$, so is $\{U_j\}_{j\neq k}$. Therefore, the index is at least n-1.
\end{proof}

\section{Extendsion to K\"{a}hler manifold with positive bisectional curvature, $k$-pseudoconvex}

\subsection{$k$-pseudoconvex}
A compact hypersurface $M$ in Riemannian manifold is $k$-convex if for each point $p\in M$, the trace of the second fundamental form $\Pi$ with respect to inward normal restricted to any $k$ plane of $T_pM$ is positive, i.e.
\[Tr_{U}(\Pi)>0\]
for any $k$-dimensional subspace $U$ of $T_pM$. We can define $k$-pseudoconvex in an analogous way. Suppose now that $\Omega=\{z\in C^n : \rho(z)<0\}$ is a compact domain with $C^2$ boundary in $\C^n$ or K\"{a}hler manifold with positive bisectional curvature. For each point $p$ on the boundary, we denote $W_p$ the subspace of holomorphic tangent space such that $V(\rho)=0$ for any $V\in W$. The second fundamental form $\Pi$ extended linearly give a hermitian form on each $W_p$
\[\Pi(X,Y)=-\langle\nabla_X\overline{Y},\nu\rangle\]
where $\nu$ is unit outward normal to $\partial \Omega$.

\begin{definition}{($k$-pseudoconvexity)}
The domain $\Omega$ is strictly $k$-pseudoconvex if for every point $p\in\partial\Omega$,
\[Tr_{U}(\Pi)>0\]
for any $k$-dimensional subspace $U$ of $W_p$
\end{definition}

In the construction of holomorphic section, we obtain $n-1$ holomorphic variations. And that provides a general index theorem in $k$-pseudoconvex domain.
\begin{theorem}
Let $D$ be unit disk and $N$ be strictly $k$-pseudoconvex domain in $ C^n$. Let $f:D\to N$ be a smooth map, with $f(\partial D)\subset \partial N$. Suppose $f$ is critical point of $\overline{\partial}$-energy. If $f$ is not holomorphic, then $f$ has index at least $n-k$ for the $\overline{\partial}$-energy.
\end{theorem}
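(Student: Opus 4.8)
The plan is to mimic the proof of Theorem \ref{maintheorem} almost verbatim, replacing the single negative-index variation count $n-1$ by $n-k$ and using the strict $k$-pseudoconvexity hypothesis in place of strict pseudoconvexity at the one place where positivity of the boundary integrand is invoked. First I would invoke Lemma \ref{holosection} and Lemma \ref{holosection2} exactly as before: the topological triviality of $f^\ast TN$ and the Malgrange/Fredholm argument give a set $W_1,\dots,W_{2n}$ of $\overline{\partial}$-closed sections of $f^\ast TN\otimes\C$ that are real on $\partial D$, and after rescaling and rotation we extract $\{W_{i_1},JW_{i_1},\dots,W_{i_n},JW_{i_n}\}$ orthonormal on the boundary. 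Setting $V_{i_j}=W_{i_j}-iJW_{i_j}$ we again obtain $n$ holomorphic $(1,0)$-sections, linearly independent over $\C$ and vanishing only at finitely many points.

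Next, since $f$ is not holomorphic, $(\tfrac{df}{d\bar z})^{(1,0)}$ is not identically zero, and orthonormality on the boundary forces $\bbrk{V_{i_k},\tfrac{df}{d\bar z}}\not\equiv 0$ for some $k$. As in the proof of Theorem \ref{maintheorem}, for $j\neq k$ I would form $U_j=\bbrk{V_{i_k},\tfrac{df}{d\bar z}}V_{i_j}-\bbrk{V_{i_j},\tfrac{df}{d\bar z}}V_{i_k}$, which is $\bbrk{\;,\;}$-orthogonal to $\tfrac{df}{d\bar z}$ and hence (Proposition \ref{varfield}) yields admissible real variations $\mathrm{Re}(U_j)$, $\mathrm{Im}(U_j)$, and satisfies $\nabla_{\frac{d}{d\bar z}}U_j=0$. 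By Lemma \ref{indexformula} all interior and the last boundary term drop out, leaving
\[
I(U_j,U_j)=\tfrac12\int_{\partial D}\bbrk{\nabla_{U_j}\overline{U_j},\tfrac{df}{dr}+J\tfrac{df}{d\theta}}\,d\theta
=\tfrac12\int_{\partial D}\lambda\,\bbrk{\nabla_{U_j}\overline{U_j},\nu}\,d\theta,
\]
using Proposition \ref{outward}. The new ingredient is the pointwise sign of $\bbrk{\nabla_{U_j}\overline{U_j},\nu}$: rather than a single section it must be controlled \emph{in aggregate} over the $n-k$ sections $\{U_j\}_{j\neq k}$ in a way that the definition of strict $k$-pseudoconvexity (positivity of $\mathrm{Tr}_U(\Pi)$ over every $k$-plane $U\subset W_p$) can be applied. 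The device is to observe that on $\partial D$ each $U_j$ takes values in $W_p$ (being a combination of holomorphic tangent vectors tangent to $\partial N$, which follows from $f(\partial D)\subset\partial N$ and the boundary condition), and that $\sum_{j}-\bbrk{\nabla_{U_j}\overline{U_j},\nu}=\sum_j\Pi(U_j,U_j)$ is a sum over an $(n-1)$-dimensional family; after discarding at most $k-1$ of them one obtains a complementary piece on which the trace-positivity forces a negative contribution. Concretely, I would show that one cannot have $I(U_j,U_j)\geq 0$ for more than $k-1$ of the indices $j\neq k$: if $I(U_j,U_j)\geq 0$ on some index set $S$ with $|S|\geq k$, then $\int_{\partial D}\lambda\sum_{j\in S}\Pi(U_j,U_j)\,d\theta\leq 0$, yet at a boundary point where $\lambda>0$ (such a point exists since otherwise $\tfrac{df}{dr}+J\tfrac{df}{d\theta}\equiv 0$ and $f$ is holomorphic) the $U_j$, $j\in S$, span at least a $k$-dimensional subspace of $W_p$ — here one uses that $\{V_{i_j}\}$ are independent over $\C$ and the $\bbrk{V_{i_k},\tfrac{df}{d\bar z}}$ factor is nonzero there — so $\sum_{j\in S}\Pi(U_j,U_j)>0$, a contradiction.

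Thus at least $n-k$ of the $U_j$ satisfy $I(U_j,U_j)<0$, and since $0>I(U_j,U_j)=I(\mathrm{Re}\,U_j,\mathrm{Re}\,U_j)+I(\mathrm{Im}\,U_j,\mathrm{Im}\,U_j)$ each such $j$ contributes a real variation of negative index; linear independence of the corresponding $U_j$ over $\C$ (inherited from that of the $V_{i_j}$) translates into linear independence over $\R$ of the chosen real or imaginary parts, giving a negative-definite subspace of dimension at least $n-k$. I expect the main obstacle to be the aggregation argument in the previous paragraph: in the $k=1$ case a single nonvanishing boundary integrand suffices, but for $k\geq 2$ one must argue that the sections $\{U_j\}_{j\in S}$, restricted to the boundary point set where $\lambda>0$ and the relevant coefficient is nonzero, genuinely span a $k$-dimensional complex subspace of the Levi null space $W_p$, and one must be slightly careful that the finitely many common zeros of the $V_{i_j}$ and the zeros of $\bbrk{V_{i_k},\tfrac{df}{d\bar z}}$ do not sit exactly on $\partial D$ (they do not, since these sections are real and nonzero on $\partial D$ by construction) so that the pointwise trace inequality can be integrated against $\lambda\geq 0$ without loss.
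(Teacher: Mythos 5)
Your proposal follows the paper's own proof essentially verbatim: the paper likewise takes the $n-1$ holomorphic variations $U_j$ constructed for Theorem \ref{maintheorem}, observes that any $k$ of them span a $k$-dimensional subspace of $W_p$ on the boundary, and invokes strict $k$-pseudoconvexity to conclude that every $k$-element subset contains at least one $U_j$ with $I(U_j,U_j)<0$, hence at least $n-k$ of them give negative index form. The aggregation step you flag as the main obstacle is treated no more carefully in the paper, which simply asserts $\sum_{j=1}^{k}\int_{\partial D}\bbrk{\nabla_{U_{i_j}}\overline{U_{i_j}},\nu}<0$ from the spanning property alone (without addressing that $\mathrm{Tr}_U(\Pi)$ is a trace with respect to an orthonormal basis while the $U_{i_j}$ need not be orthonormal), so your argument matches the paper's in both structure and level of rigor.
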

For the case $k=1$, the domain is pseudoconvex and the above theorem coincides with theorem \ref{maintheorem}.

\begin{proof}
We define $U_1,\ldots, U_{n-1}$ as the holomorphic variations in the proof of theorem \ref{maintheorem}. As $\{U_1,\ldots, U_{n-1}\}$ is linearly independent over $\C$, for any point $p$ on $\partial \Omega$, any $\{U_{i_1},\ldots,U_{i_k}\}$ with $1\leq i_1<\ldots<i_k\leq n$ would span a $k$-dimensional subspace of $W_p$. Then by $k$-pseudoconvex,
\[\sum_{j=1}^{k}I(U_{i_j},U_{i_j})=\sum_{j=1}^k\int_{\partial D}\bbrk{\nabla_{U_{i_j}}\overline{U_{i_j}},\nu}<0\]
Therefore, there is at least one $j$, $1\leq j \leq k$ such that $I(U_{i_j},U_{i_j})<0$ and the index would be at least $n-k$.
\end{proof}

\subsection{Positive bisectional curvature}
In $\C^n$, the metric is flat and the curvature vanishes in the formula of index form. However, We can extend the theorem \ref{maintheorem} to a K\"{a}hler manifold $M$ with nonnegative holomorphic bisectional curvature,

\begin{definition}{(Holomorphic Bisectional Curvature)}
Let $X$, $Y$ be two unit tangent vector at a point in $M$. The holomorphic bisectional curvature is defined as
\[BHR(X,Y)=R(JX,X,Y,JY)=R(\xi, \overline{\eta}, \eta, \overline{\xi})\]
where $\xi=\frac{\sqrt{2}}{2}(X-iJX)$ and $\eta=\frac{\sqrt{2}}{2}(Y-iJY)$.
\end{definition}

From the formula in lemma \ref{indexformula}, the curvature term is holomorphic bisectional curvature when $V$ is a section of $f^\ast T^{(1,0)}N$. And the construction of (1,0) holomorphic variation can be done on any K\"{a}hler manifold. Therefore, we have index theorem on pseudoconvex domain in K\"{a}hler manifold with nonnegative holomorphic bisectional curvature.

\begin{theorem}
Let $D$ be a disk and $N$ be a strictly pseudoconvex domain in $n$-dimension K\"{a}hler manifold with nonnegative bisectional curvature. Let $f:D\to N$ be a smooth map, with $f(\partial D)\subset \partial N$. Suppose $f$ is critical point of $\overline{\partial}$-energy. If $f$ is not holomorphic, then $f$ has index at least n-1 for the $\overline{\partial}$-energy.
\end{theorem}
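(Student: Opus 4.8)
The plan is to reproduce the argument of Section~3 essentially word for word; the only new feature of the Kähler setting is that the curvature term in Lemma~\ref{indexformula} no longer vanishes identically, and one must instead show it has a favorable sign.

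First I would check that the entire construction of holomorphic variations in Section~3 is insensitive to the flatness of $\C^n$. The existence of $2n$ holomorphic sections of $f^\ast TN\otimes\C$ that are real on $\partial D$ (Lemma~\ref{holosection}) needs only Malgrange's theorem, which puts a holomorphic structure on $f^\ast TN\otimes\C$ over the Riemann surface $D$ for any Kähler target, together with the topological Fredholm index count. Lemma~\ref{holosection2} uses only that $(W_i,W_j)$ is holomorphic — the complex bilinear extension of the metric is $\nabla$-parallel — and real on $\partial D$, hence constant. Proposition~\ref{varfield} is linear algebra together with the critical-point boundary condition, and Proposition~\ref{outward} (that $f$ is harmonic, weakly conformal, and $\frac{df}{dr}+J\frac{df}{d\theta}=\lambda\nu$ with $\lambda\ge 0$) rests only on the first variation formula, the diffeomorphism invariance of $\int_D f^\ast\omega^N$ (valid because $\omega^N$ is closed), and $f(D)\subset N$. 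Consequently, exactly as before, one obtains $n-1$ sections $U_1,\dots,U_{n-1}$ of $f^\ast T^{(1,0)}N$, linearly independent over $\C$, with $\nabla_{\frac{d}{d\bar z}}U_j=0$, $\bbrk{U_j,\frac{df}{d\bar z}}=0$, vanishing at only finitely many points, and restricting on $\partial D$ to the complexified holomorphic tangent space of $\partial N$.

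Next I would insert an arbitrary nonzero $W=\sum_j c_jU_j$ — which is again holomorphic, of type $(1,0)$, and orthogonal to $\frac{df}{d\bar z}$ — into the formula of Lemma~\ref{indexformula}. As in the proof of Theorem~\ref{maintheorem}, the term $2\int_D\|\nabla_{\frac{d}{d\bar z}}W\|^2$ vanishes, the last boundary integral vanishes because $W$ is a holomorphic section of $f^\ast T^{(1,0)}N$, and the remaining boundary term equals $\frac12\int_{\partial D}\bbrk{\nabla_W\overline W,\lambda\nu}\,d\theta=-\frac12\int_{\partial D}\lambda\,\Pi(W,W)\,d\theta$. This is strictly negative: $\lambda\ge 0$ is not identically zero (otherwise $(\frac{df}{d\bar z})^{(1,0)}$, which is anti-holomorphic, vanishes on $\partial D$, hence vanishes identically, forcing $f$ to be holomorphic, contrary to hypothesis), while $\Pi(W,W)>0$ away from the finite zero set of $W$ by strict pseudoconvexity. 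Thus the only remaining term is the curvature term $-\int_D\bbrk{R(W,\frac{df}{dz})\frac{df}{d\bar z},W}\,dx\wedge dy$, which was simply zero in the flat case.

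The crux — and the step I expect to be the main obstacle — is to show $\bbrk{R(W,\frac{df}{dz})\frac{df}{d\bar z},W}\ge 0$ pointwise. I would split $\frac{df}{dz}$ into its type components and recall that the Riemann curvature of a Kähler metric, regarded as a complex multilinear $(0,4)$-tensor, vanishes on every four-tuple containing more than two entries of type $(1,0)$. Since $W$ is of type $(1,0)$ and $\overline W$ of type $(0,1)$, only the $(0,1)$-part of $\frac{df}{dz}$ and the $(1,0)$-part $\eta:=(\frac{df}{d\bar z})^{(1,0)}$ of $\frac{df}{d\bar z}$ contribute, and the expression collapses to $\bbrk{R(W,\overline\eta)\eta,W}=R(W,\overline\eta,\eta,\overline W)$. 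Writing $W$ and $\eta$ as complex multiples of $(1,0)$-lifts of unit real vectors, this equals $\|W\|^2\|\eta\|^2$ times the holomorphic bisectional curvature $BHR$ of the corresponding $J$-invariant real $2$-planes, hence is $\ge 0$ by hypothesis. Therefore the curvature term contributes nonpositively and $I(W,W)\le -\frac12\int_{\partial D}\lambda\,\Pi(W,W)\,d\theta<0$ for every nonzero $W$ in the complex span of $U_1,\dots,U_{n-1}$. Splitting $I(U_j,U_j)=I(\mathrm{Re}\,U_j,\mathrm{Re}\,U_j)+I(\mathrm{Im}\,U_j,\mathrm{Im}\,U_j)$ and invoking the $\C$-linear independence of the $U_j$, exactly as in the proof of Theorem~\ref{maintheorem}, yields Morse index at least $n-1$. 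The Kähler hypothesis and the nonnegativity of the bisectional curvature are used only in the identification of this one surviving curvature term as a correctly signed bisectional curvature.
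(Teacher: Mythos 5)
Your proposal is correct and is essentially the paper's own proof: the paper likewise reuses the variations $U_j$ constructed for Theorem \ref{maintheorem} and observes that the one surviving curvature term is the holomorphic bisectional curvature $\bbrk{R(U_j,(\frac{df}{dz})^{(0,1)})(\frac{df}{d\bar{z}})^{(1,0)},U_j}$, which is nonnegative, so the index form remains negative on the complex span of the $U_j$ (your extra verifications that the holomorphic-section construction, Proposition \ref{varfield}, Proposition \ref{outward}, and the $\lambda\not\equiv 0$ step all survive in the K\"ahler setting are welcome details the paper leaves implicit). The only minor slip is your justification for discarding the cross term $R(W,(\frac{df}{dz})^{(1,0)},(\frac{df}{d\bar{z}})^{(0,1)},\overline{W})$: that four-tuple has exactly two entries of type $(1,0)$, so it is eliminated not by your stated criterion but by the K\"ahler symmetry that the curvature vanishes whenever both arguments of an antisymmetric pair of slots have the same type (only components of type $R_{i\bar{j}k\bar{l}}$ are nonzero); the conclusion, and agreement with the paper, is unaffected.
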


\begin{proof}
Let $U_j$ be the holomorphic variations constructed in the proof of theorem \ref{maintheorem}. Then by lemma \ref{indexformula},
\begin{align*}
I(U_j,U_j)&=2\int_D-\langle\brk{R(U_j,\frac{df}{dz})\frac{df}{d\bar{z}},U_j}\rangle dx\wedge dy+\frac{1}{2}\int_{\partial D}\bbrk{\nabla_{U_j} \overline{U_j}, \frac{df}{dr}+J\frac{df}{d\theta}}d\theta
\end{align*}
Since $U_j\in\Gamma(f^\ast(T^{(1,0)}N))$, $\langle\brk{R(U_j,\frac{df}{dz})\frac{df}{d\bar{z}},U_j}\rangle=\langle\brk{R(U_j,\frac{df}{dz}^{(0,1)})\frac{df}{d\bar{z}}^{(1,0)},U_j}\rangle$ which is holomorphic bisectional curvature and hence nonnegative. Therefore, $I(U_j,U_j)<0$.
\end{proof}

\section{Minimal disks in pseudoconvex domain in $\C^2$}
In this section, we discuss examples of free boundary minimal disk for energy and $\overline{\partial}$-energy in pseudoconvex domains in $\C^2$. We identify $\C^2=(z_1, z_2)$ as $\R^4=(x_1, x_2, y_1, y_2)$ with the usual almost complex structure $J$ such that $J\frac{d}{dx^i}=\frac{d}{dy^i}$ for $i=1$, $2$. The domain $M=\{(x_1,x_2,y_1,y_2)\;|\; x_1^2+x_2^2<1\}$ is a strictly pseudoconvex domain in $\C^2$. The first example is a stable disk for energy but not for $\overline{\partial}$-energy. Let $f_1:D\to M$ be a map defined by 
\[f_1(x,y)=(x , y, 0, 0)\]
$f_1$ does not minimize the $\overline{\partial}$-energy as there is a continuous deformation of $f_1$ to a holomorphic disk $f_2(x,y)=(x,y,y,-x)$ which is given by $F_t(x,y)=(x,y,ty,-tx)$ and the $\overline{\partial}$-energy is decreasing as $t$ increases. That deformation also shows that $f_1$ is not even a critical point of $\overline{\partial}$-energy. To see that $f_1$ is a stable free boundary minimal disk, consider the projection $\pi$ to the first two coordinate $(x_1, x_2)$. Any surface homotopic to $f_1$ in $M$ would be mapped to a unit disk in $\R^2$ and the energy of $f_1$ is same as the area of the unit disk. Since the image has smaller area under the projection and the energy is always greater than or equal to the area, $f_1$ therefore minimizes the energy. A holomorphic disk in pseudoconvex domain does not necessarily minimize the energy. As mentioned above, $f_2$ is a holomorphic disk and hence minimizes $\overline{\partial}$-energy, but $f_2$ does not minimize the energy as it has larger area than $f_1$.

The next example is a critical point for $\overline{\partial}$-energy that is not holomorphic. Let $M=\{(x_1,x_2,y_1,y_2)\;|\; x_1^2+x_2^2+y_1^2+y_2^2<1\}$ be a unit ball in $\R^4$. It is a strictly pseudoconvex domain since it is convex. Let $f_3:D\to M$ be defined by 
\[f_3(x,y)=(x, 0,-y,0)\]
On the boundary of the disk, the outward unit normal to $\partial M$ at $(x, 0, -y, 0)$ is $(x,0, -y, 0)$. And
\[\frac{df_3}{dr}+J\frac{df_3}{d\theta}=2(x, 0, -y, 0)\]
which is parallel to the outward unit normal. Moreover, $f_3$ is harmonic. By the first variation formula of $\overline{\partial}$-energy, $f_3$ is a critical point of $\overline{\partial}$-energy. But $f_3$ is not holomorphic and also not minimizing the $\overline{\partial}$-energy. Moreover, $f_3$ is unstable for both energy and $\overline{\partial}$-energy.

Finally, we will give an example of a stable free boundary disk for $\overline{\partial}$-energy in a weakly pseudoconvex domain. It illustrates that strictly pseudoconvex condition is essential to the instability of critical points of $\overline{\partial}$-energy. Let $M$ be the domain $\{(x_1, x_2, y_1, y_2) | (x_1-y_2)^2+(x_2+y_1)^2<1\}$ which is weakly pseudoconvex and $f_4 : D\to M$ be a map defined by $f_4(x,y)=(x,-y,0,0)$. Since $\frac{df_4}{dr}+J\frac{df_4}{d\theta}=(x, -y,-y, -x)$ that is parallel to the outward normal to $\partial M$ and $f_4$ is harmonic, $f_4$ is a critical point of $\overline{\partial}$-energy. We will prove the stability of $f_4$ in $\overline{\partial}$-energy by starting with construction of deformation of $f_4$.
Let $\sigma$, $\varphi$, $\psi$, $\eta$ be smooth function on $D$ with $\sigma=0$ on $\partial D$. Using spherical coordinate $x=r\cos \theta$ , $y=r\sin\theta$, we define a continuous deformation of $f_4$ by
\begin{align*}
F_t(r,\theta)&=\Big(\frac{(1+t\sigma)r\cos(\theta-t\varphi)+r\cos\theta}{2}+t\psi, \frac{-(1+t\sigma)r\sin(\theta-t\varphi)-r\sin\theta}{2}+t\eta,\\
&\quad \frac{-(1+t\sigma)r\sin(\theta-t\varphi)+r\sin\theta}{2}-t\eta, \frac{-(1+t\sigma)r\cos(\theta-t\varphi)+r\cos\theta}{2}+t\psi\Big)
\end{align*}
Then $\frac{dF_t}{dt}|_{t=0}=\frac{\sigma}{2}(x,-y,-y,-x)+\frac{\varphi}{2}(y,x,x,-y)+\psi(1,0,0,1)+\eta(0,1,-1,0)$. As  $\{(y,x,x,-y), (1,0,0,1), (0,1,-1,0)\}$ are orthogonal basis to tangent space of $\partial M$ at the boundary of the disk and $\sigma=0$ on $\partial D$, $F_t$ is a valid variation of $f_4$ for small $t$. The $\overline{\partial}$-energy of $F_t$ is $\int_D|\overline{\partial}F_t|^2=\int_D|\frac{dF_t}{dx}+J\frac{dF_t}{dy}|^2$. Using $\frac{d}{dx}=\cos\theta\frac{d}{dr}-\frac{1}{r}\sin\theta\frac{d}{d\theta}$ and $\frac{d}{dy}=\sin\theta\frac{d}{dr}+\frac{1}{r}\cos\theta\frac{d}{d\theta}$, we then have
\[\frac{dF_t}{dx}=\Big(\frac{1+A}{2}+t\psi_x, \frac{B}{2}+t\eta_x, \frac{B}{2}-t\eta_x, \frac{1-A}{2}+t\psi_x\Big)\]
and 
\[\frac{dF_t}{dy}=\Big(\frac{C}{2}+t\psi_y,\frac{D-1}{2}+t\eta_y, \frac{D+1}{2}-t\eta_y, \frac{-C}{2}+t\psi_y\Big)\]
where
\begin{align*}
A&=(1+t\sigma)\cos(t\varphi)+(1+t\sigma)rt\cos\theta\sin(\theta-t\varphi)\frac{d\varphi}{dr}+rt\cos\theta\cos(\theta-t\varphi)\frac{d\sigma}{dr}\\
&\quad-(1+t\sigma)t\sin\theta\sin(\theta-t\varphi)\frac{d\varphi}{d\theta}-t\sin\theta\cos(\theta-t\varphi)\frac{d\sigma}{d\theta}\\
B&=(1+t\sigma)\sin(t\varphi)+(1+t\sigma)rt\cos\theta\cos(\theta-t\varphi)\frac{d\varphi}{dr}-rt\cos\theta\sin(\theta-t\varphi)\frac{d\sigma}{dr}\\
&\quad-(1+t\sigma)t\sin\theta\cos(\theta-t\varphi)\frac{d\varphi}{d\theta}+t\sin\theta\sin(\theta-t\varphi)\frac{d\sigma}{d\theta}\\
C&=(1+t\sigma)\sin(t\varphi)+(1+t\sigma)rt\sin\theta\sin(\theta-t\varphi)\frac{d\varphi}{dr}+rt\sin\theta\cos(\theta-t\varphi)\frac{d\sigma}{dr}\\
&\quad+(1+t\sigma)t\cos\theta\sin(\theta-t\varphi)\frac{d\varphi}{d\theta}+t\cos\theta\cos(\theta-t\varphi)\frac{d\sigma}{d\theta}\\
D&=-(1+t\sigma)\cos(t\varphi)+(1+t\sigma)rt\sin\theta\cos(\theta-t\varphi)\frac{d\varphi}{dr}-rt\sin\theta\sin(\theta-t\varphi)\frac{d\sigma}{dr}\\
&\quad+(1+t\sigma)t\cos\theta\cos(\theta-t\varphi)\frac{d\varphi}{d\theta}-t\cos\theta\sin(\theta-t\varphi)\frac{d\sigma}{d\theta}
\end{align*}
Therefore,
\[\frac{dF_t}{dx}+J\frac{dF_t}{dy}=\Big(\frac{E}{2}+t\psi_x+t\eta_y, \frac{F}{2}+t\eta_x-t\psi_y, \frac{F}{2}-t\eta_x+t\psi_y, \frac{-E}{2}+t\psi_x+t\eta_y\Big)\]
where
\begin{align*}
E&=2(1+t\sigma)\cos(t\varphi)-(1+t\sigma)rt\sin(t\varphi)\frac{d\varphi}{dr}+rt\cos(t\varphi)\frac{d\sigma}{dr}-(1+t\sigma)t\cos(t\varphi)\frac{d\varphi}{d\theta}\\
&\quad-t\sin(t\varphi)\frac{d\sigma}{d\theta}\\
F&=2(1+t\sigma)\sin(t\varphi)-(1+t\sigma)rt\cos(t\varphi)\frac{d\varphi}{dr}+rt\sin(t\varphi)\frac{d\sigma}{dr}-(1+t\sigma)t\sin(t\varphi)\frac{d\varphi}{d\theta}\\
&\quad+t\cos(t\varphi)\frac{d\sigma}{d\theta}\\
\end{align*}
Then 
\begin{align*}
\Big|\frac{dF_t}{dx}+J\frac{dF_t}{dy}\Big|^2&=(\frac{E}{2}+t\psi_x+t\eta_y)^2+(\frac{F}{2}+t\eta_x-t\psi_y)^2+(\frac{F}{2}-t\eta_x+t\psi_y)^2\\
&\;\quad+(\frac{-E}{2}+t\psi_x+t\eta_y)^2\\
&=2\Big((\frac{E}{2})^2+(\frac{F}{2})^2+(t\psi_x+t\eta_y)^2+(t\eta_x-t\psi_y)^2\Big)\\
&=\frac{1}{2}(E^2+F^2)+2t^2(\psi_x+\eta_y)^2+2t^2(\eta_x-\psi_y)^2
\end{align*}
And
\begin{align*}
E^2+F^2&=4+8t\sigma+4t^2\sigma^2+4(1+t\sigma)rt\frac{d\sigma}{dr}-4(1+t\sigma)^2t\frac{d\varphi}{d\theta}+((1+t\sigma)rt\frac{d\varphi}{dr}+t\frac{d\sigma}{d\theta})^2\\
&\quad+(rt\frac{d\sigma}{dr}-(1+t\sigma)t\frac{d\varphi}{d\theta})^2
\end{align*}
Since 
\begin{align*}
\int_D 8t\sigma+4t^2\sigma^2+4(1+t\sigma)rt\frac{d\sigma}{dr}&=\int_0^{2\pi}\int_0^1 8rt\sigma+4rt^2\sigma^2+4(1+t\sigma)r^2t\frac{d\sigma}{dr}\;\;drd\theta\\
&=\int_0^{2\pi}\int_0^1 \frac{d}{dr}(4r^2t\sigma+2r^2t^2\sigma^2)\;\;drd\theta\\
&=0,
\end{align*}
we have
\begin{align*}
\int_D|\frac{dF_t}{dx}+J\frac{dF_t}{dy}|^2=&\int_D\frac{1}{2}\Big(4-4(1+t\sigma)^2t\frac{d\varphi}{d\theta}+t^2((1+t\sigma)r\frac{d\varphi}{dr}+\frac{d\sigma}{d\theta})^2\\
&\quad+t^2(r\frac{d\sigma}{dr}-(1+t\sigma)\frac{d\varphi}{d\theta})^2\Big)+2t^2(\psi_x+\eta_y)^2+2t^2(\eta_x-\psi_y)^2
\end{align*}
and $\frac{d}{dt}\Big|_{t=0}\int_D|\frac{dF_t}{dx}+J\frac{dF_t}{dy}|^2=0$ which agrees the fact that $f_4$ is critical point of $\overline{\partial}$-energy. We then compute the second derivative of $\overline{\partial}$-energy at $t=0$,
\begin{align*}
\frac{d^2}{dt^2}\Big|_{t=0}\int_D |\frac{dF_t}{dx}+J\frac{dF_t}{dy}|^2=&\int_D -8\sigma\frac{d\varphi}{d\theta}+(r\frac{d\varphi}{dr}+\frac{d\sigma}{d\theta})^2+(r\frac{d\sigma}{dr}-\frac{d\varphi}{d\theta})^2\\
&\qquad+4(\psi_x+\eta_y)^2+4(\eta_x-\psi_y)^2
\end{align*}
The first term on the right hand side can be rewritten as follow
\begin{align*}
\int_D -8\sigma\frac{d\varphi}{d\theta}&=\int_0^1\int_0^{2\pi}-8r\sigma\frac{d\varphi}{d\theta}\;dr\;d\theta\\
&=\int_0^1\int_0^{2\pi}\frac{d}{dr}(-4r^2\sigma\frac{d\varphi}{d\theta})+4r^2\frac{d\sigma}{dr}\frac{d\varphi}{d\theta}+4r^2\sigma\frac{d^2\varphi}{drd\theta}\;drd\theta\\
&=\int_0^1\int_0^{2\pi}4r^2\frac{d\sigma}{dr}\frac{d\varphi}{d\theta}-4r^2\frac{d\sigma}{d\theta}\frac{d\varphi}{dr}\;drd\theta
\end{align*}
Therefore,
\begin{align*}
\frac{d^2}{dt^2}\Big|_{t=0}\int_D |\frac{dF_t}{dx}+J\frac{dF_t}{dy}|^2&=\int_D (r\frac{d\varphi}{dr}-\frac{d\sigma}{d\theta})^2+(r\frac{d\sigma}{dr}+\frac{d\varphi}{d\theta})^2\\
&\qquad +4(\psi_x+\eta_y)^2+4(\eta_x-\psi_y)^2\\
&\geq0
\end{align*}
Hence $f_4$ is stable for any variation $V=\frac{\sigma}{2}(x,-y,-y,-x)+\frac{\varphi}{2}(y,x,x,-y)+\psi(1,0,0,1)+\eta(0,1,-1,0)$. Now we are going to show that $f_4$ is stable to any arbitrary variation $V$. Suppose that $f_4$ is unstable to a variation $V$. Let $\epsilon>0$, $\rho_\epsilon$ be a radical symmetric smooth function on the disk $D$ such that $\rho_\epsilon=0$ on $B_{\epsilon^2}(0)$, $\rho_\epsilon=1$ on $D\backslash B_\epsilon(0)$ and $|\frac{d\rho_\epsilon}{dr}|\leq \frac{1}{r|\ln \epsilon|}$. Then $f_4$ would be stable to the variation $\rho_\epsilon V$. From the second variation formula of $\overline{\partial}$-energy given by the index form,
\begin{align*}
0\leq I(\rho_\epsilon V, \rho_\epsilon V)=&\frac{1}{2}\Big[\int_D \langle\nabla_{\frac{d}{dx}}(\rho_\epsilon V),\nabla_{\frac{d}{dx}}(\rho_\epsilon V)\rangle+\langle\nabla_{\frac{d}{dy}}(\rho_\epsilon V), \nabla_{\frac{d}{dy}}(\rho_\epsilon V)\rangle dx\wedge dy\\
&\;\;+\int_{\partial D}\langle\nabla_V V,\frac{df}{dr}+J\frac{df}{d\theta}\rangle + \langle J\nabla_{\frac{d}{d\theta}}V, V\rangle\;d\theta\Big]\\
=&\frac{1}{2}\Big[\int_D \|\nabla \rho_\epsilon\|^2\|V\|^2+2\rho_\epsilon\frac{d\rho_\epsilon}{dx}\langle V, \nabla_\frac{d}{dx} V\rangle+\\
&\;\qquad+2\rho_\epsilon\frac{d\rho_\epsilon}{dy}\langle V, \nabla_\frac{d}{dy} V\rangle+\rho_\epsilon^2\|\nabla V\|^2\;dx\wedge dy\\
&\;\;\;+\int_{\partial D}\langle\nabla_V V,\frac{df}{dr}+J\frac{df}{d\theta}\rangle + \langle J\nabla_{\frac{d}{d\theta}}V, V\rangle\;d\theta\Big]\\
\leq&\frac{1}{2}\Big[\int_D \|\nabla\rho_\epsilon\|^2\|V\|^2+4\rho_\epsilon\|\nabla \rho_\epsilon\|\|V\|\|\nabla V\|\;dx\wedge dy\Big]+I(V,V)\\
\leq&C\Big[\int_D \|\nabla\rho_\epsilon\|^2+4\rho_\epsilon\|\nabla \rho_\epsilon\|\Big]+I(V,V)
\end{align*}
where the constant $C$ depends on $\|V\|$, $\|\nabla V\|$. Then
\begin{align*}
\int_D \|\nabla \rho_\epsilon\|^2=&\int_0^{2\pi}\int_0^1\|\nabla \rho_\epsilon\|^2r\;drd\theta\\
\leq&\int_0^{2\pi}\int_{\epsilon^2}^{\epsilon}\frac{1}{r|\ln\epsilon|^2}\;drd\theta\\
\leq&\frac{C}{|\ln\epsilon|}
\end{align*}
and
\begin{align*}
\int_D 4\rho_\epsilon \|\nabla\rho_\epsilon\|\leq&\int_0^{2\pi}\int_{\epsilon^2}^\epsilon \frac{C}{|\ln\epsilon|}\;drd\theta\\
\leq&\frac{C\epsilon}{|\ln\epsilon|}
\end{align*}
Therefore, 
\[0\leq C(\frac{1}{|\ln\epsilon|}+\frac{\epsilon}{|\ln\epsilon|})+I(V,V)\]
Letting $\epsilon$ to 0, we have $I(V,V)\geq0$ and contradiction aries. Therefore $f_4$ is a stable critical point of $\overline{\partial}$-energy.

 \bibliographystyle{amsplain}
 
\end{document}